\DeclareMathAlphabet\mathoo{U}{eur}{b}{n}
 \DeclareMathOperator{\Real}{Re}
 \DeclareMathOperator{\Imaginary}{Im}
 \DeclareMathOperator{\Res}{Res}
 \DeclareMathOperator{\Ker}{Ker}
 \DeclareMathOperator{\Lin}{span}
\newtheorem{theorem}{Theorem}
\newtheorem{proposition}[theorem]{Proposition}
\newtheorem{corollary}[theorem]{Corollary}
\newtheorem{lemma}[theorem]{Lemma}
\theoremstyle{definition}
\newtheorem{definition}{Definition}
\newtheorem{example}{Example}
\theoremstyle{remark}
\newtheorem{remark}{Remark}
\begin{document}
\large

\title[A complete pair of solvents]{A complete pair of solvents\\ of a quadratic matrix pencil}

\author{V.~G. Kurbatov$^*$}
 \address{Department of System Analysis and Control,
Voronezh State University\\ 1, Universi\-tet\-skaya Square, Voronezh 394018, Russia}
\email{\textcolor[rgb]{0.00,0.00,0.84}{kv51@inbox.ru}}
\thanks{$^*$ Corresponding author}

\author{I.~V. Kurbatova}
 \address{Department of Software Development and Information Systems Administration,
Vo\-ro\-nezh State University\\ 1, Universitetskaya Square, Voronezh 394018, Russia}
\email{\textcolor[rgb]{0.00,0.00,0.84}{irakurbatova@gmail.com}}

\subjclass{Primary 65F60; Secondary 15A69, 46B28, 30E10, 97N50}

\keywords{matrix pencil, matrix exponential, solvent, complete pair, Jordan chain}


\begin{abstract}
Let $B$ and $C$ be square complex matrices. The differential equation
\begin{equation*}
x''(t)+Bx'(t)+Cx(t)=f(t)
\end{equation*}
is considered.
A \emph{solvent} is a matrix solution $X$ of the equation $X^2+BX+C=\mathbf0$. A pair of solvents $X$ and $Z$ is called \emph{complete} if the matrix $X-Z$ is invertible. Knowing a complete pair of solvents $X$ and $Z$ allows us to reduce the solution of the initial value problem to the calculation of two matrix exponentials $e^{Xt}$ and $e^{Zt}$. The problem of finding a complete pair $X$ and $Z$, which leads to small rounding errors in solving the differential equation, is discussed.
\end{abstract}
\maketitle

\section*{Introduction}\label{s:intro}
The characteristic function of the second order linear differential equation
\begin{equation*}
x''(t)+Bx'(t)+Cx(t)=f(t)
\end{equation*}
with constant matrix coefficients $B$ and $C$ is the matrix pencil
\begin{equation*}
L(\lambda)=\lambda^2\mathbf1+\lambda B+C,\qquad \lambda\in\mathbb C.
\end{equation*}
A matrix solution of the equation
\begin{equation*}
X^2+BX+C=\mathbf0
\end{equation*}
is called~\cite[\S~3]{Krein-LangerI:eng} a (\emph{right}) \emph{solvent} of the pencil. They say~\cite[p. 75]{Gohberg-Lancaster-Rodman:Matrix_polynomials} (cf.~\cite[p.~392]{Krein-LangerI:eng}) that two right solvents $X$ and $Z$ form a \emph{complete pair} if the matrix $X-Z$ is invertible. If one has a complete pair of right solvents, the solution of the differential equation with the initial conditions
\begin{align*}
x(0)&=u_0,\\
x'(0)&=u_1
\end{align*}
can be represented (see Theorem~\ref{t:sol of ini val prob:2} Corollary~\ref{c:Perov} for more details) in the form
\begin{equation*}
x(t)=U'(t)u_0+U(t)(u_1+Bu_0)+\int_0^tU(t-s)f(s)\,ds,
\end{equation*}
where
\begin{align*}
U(t)&=\bigl(e^{Xt}-e^{Zt}\bigr)(X-Z)^{-1},\\
U'(t)&=\bigl(Xe^{Xt}-Ze^{Zt}\bigr)(X-Z)^{-1}.
\end{align*}
These formulas show that if a complete pair of right solvents is known, then solving the initial value problem is essentially reduced to the calculation of two matrix exponentials $e^{Xt}$ and $e^{Zt}$. This idea was first appeared in~\cite{Keldysh51:eng} and later was discussed by many authors, see, e.~g.,~\cite{Keldysh71:eng,Krein-LangerI:eng,Krein-LangerII:eng,Langer67,Langer76,Marcus88:eng,
Perov-Kostrub20:eng,Perov-Kostrub20a:eng,
Shkalikov-MS88:eng,Shkalikov96:eng,Shkalikov98,Shkalikov-Pliev89:eng}.

We consider matrices $B$ and $C$ of small size (about $10\times10$ or slightly larger). Only in this case, the representation of matrix exponentials as a function depending on $t$ is not very cumbersome and hence can be effectively used. On the other hand, in the same case the whole set of pairs of right solvents $X$ and $Z$ can be calculated; we describe a corresponding algorithm in Section~\ref{s:finding a comp pair}. The aim of the paper is to call attention to the fact that different pairs of right solvents $X$ and $Z$ are not interchangeable. The reason is that when constructing $U(t)$ we perform some linear algebra operations that may or may not be badly conditioned; for example, the condition number $\varkappa(X-Z)=\lVert X-Z\rVert\cdot\lVert (X-Z)^{-1}\rVert$ may be large, which leads to large errors when multiplying by $(X-Z)^{-1}$.

There are many papers devoted to numerical calculation of solvents, see, e.~g.,~\cite{Davis81,Guo-Higham-Tisseur08,Higham97,Higham-Kim00,Higham-Kim01,
Tisseur-Meerbergen01,Tsai-Chen-Shieh92}.
The present paper is distinguished by the formulation of the problem: we look for a complete pair of solvents that leads to small rounding errors.

The paper is organized as follows.
In Sections~\ref{s:square pencils}, we recall general information connected with quadratic pencils and their right solvents. In Section~\ref{s:Linearization}, we recall some facts related to Jordan chains of a quadratic pencil and its companion matrix. In Section~\ref{s:finding a comp pair}, we describe a spectral algorithm of finding a right solvent. In Sections~\ref{s:comp pair}, we describe how to find all complete pairs of right solvents and to choose the best one. In Section~\ref{s:num exper}, we present a few numerical examples.

\section{Quadratic pencils and right solvents}\label{s:square pencils}
In this section, we collect some general definitions and recall some general facts.

Let  $n\in\mathbb N$.
We denote by $\mathbb C^{n\times n}$ the linear space of all complex matrices of the size $n\times n$; the symbol $\mathbf1\in\mathbb C^{n\times n}$ denotes the identical matrix, and the symbol $\mathbf0\in\mathbb C^{n\times n}$ denotes the zero matrix.
Usually we do not distinguish matrices and the operators in $\mathbb C^n$ induced by them.

\begin{definition}
Let $n\in\mathbb N$ and $B,C\in\mathbb C^{n\times n}$. We consider the second order linear differential equation
\begin{equation}\label{e:DE 2nd order}
x''(t)+Bx'(t)+Cx(t)=f(t).
\end{equation}
The (\emph{quadratic}) \emph{pencil} corresponding to~\eqref{e:DE 2nd order} is the function
\begin{equation}\label{e:pencil 2nd order}
L(\lambda)=\lambda^2\mathbf1+\lambda B+C,\qquad \lambda\in\mathbb C,
\end{equation}
with values in $\mathbb C^{n\times n}$.
The \emph{resolvent} of pencil~\eqref{e:pencil 2nd order} is the function
\begin{equation*}
R_\lambda=(\lambda^2\mathbf1+\lambda B+C)^{-1},
\end{equation*}
and the \emph{resolvent set} of pencil~\eqref{e:pencil 2nd order} is the set $\rho(B,C)$, consisting of all $\lambda\in\mathbb C$ such that this inverse exists. The \emph{spectrum} of the pencil is the complement $\sigma(B,C)=\mathbb C\setminus\rho(B,C)$ of the resolvent set.

Similarly, $\rho(X)$ and $\sigma(X)$ respectively denote the resolvent set and the spectrum of a matrix $X\in\mathbb C^{n\times n}$.
\end{definition}

Let $U$ be an open set that contains $\sigma(B,C)$ and $f:\,U\to\mathbb C$ be a holomorphic function. We consider the matrices
\begin{align}
\psi(f)&=\frac1{2\pi i}\int_\Gamma f(\lambda)(\lambda^2\mathbf1+\lambda
B+C)^{-1}\,d\lambda,\label{e:psi for square pencil}\\
\chi(f)&=\frac1{2\pi i}\int_\Gamma \lambda f(\lambda)(\lambda^2\mathbf1+\lambda
B+C)^{-1}\,d\lambda,\label{e:chi for square pencil}
\end{align}
where $\Gamma$ is a contour laying in $U$, oriented counterclockwise, and surrounding $\sigma(B,C)$.

We set $U(t)=\psi(\exp_t)$, where $\exp_t(\lambda)=e^{\lambda t}$, or more detailed
\begin{equation}\label{e:U(t)}
U(t)=\frac1{2\pi i}\int_\Gamma e^{\lambda t}(\lambda^2\mathbf1+\lambda B+C)^{-1}\,d\lambda.
\end{equation}
Obviously, differentiating~\eqref{e:U(t)} with respect to $t$, we obtain
\begin{equation}\label{e:U'(t)}
U'(t)=\frac1{2\pi i}\int_\Gamma\lambda e^{\lambda t}(\lambda^2\mathbf1+\lambda
B+C)^{-1}\,d\lambda=\chi(\exp_t).
\end{equation}

\begin{proposition}\label{p:U(0)}
The following identities hold{\rm:}
\begin{equation*}
U(0)=\mathbf0,\qquad U'(0)=\mathbf1.
\end{equation*}
\end{proposition}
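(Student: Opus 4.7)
The plan is to deform $\Gamma$ to a large circle and use the Laurent expansion of the resolvent at infinity. Since the integrand in both \eqref{e:U(t)} and \eqref{e:U'(t)} (evaluated at $t=0$) is holomorphic on $\rho(B,C)$, which contains everything outside a bounded neighborhood of $\sigma(B,C)$, Cauchy's theorem allows me to replace $\Gamma$ by the circle $\Gamma_R=\{\lambda\in\mathbb C:|\lambda|=R\}$ for any sufficiently large $R$.

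For $|\lambda|>R_0:=\max\{1,\lVert B\rVert+\lVert C\rVert\}$ the matrix $\lambda^{-1}B+\lambda^{-2}C$ has norm less than $1$, so a Neumann series gives
\begin{equation*}
(\lambda^2\mathbf1+\lambda B+C)^{-1}=\lambda^{-2}\bigl(\mathbf1+\lambda^{-1}B+\lambda^{-2}C\bigr)^{-1}=\lambda^{-2}\mathbf1-\lambda^{-3}B+R(\lambda),
\end{equation*}
where $\lVert R(\lambda)\rVert=O(|\lambda|^{-4})$ as $|\lambda|\to\infty$. Multiplying by $\lambda$ gives the analogous expansion
\begin{equation*}
\lambda(\lambda^2\mathbf1+\lambda B+C)^{-1}=\lambda^{-1}\mathbf1-\lambda^{-2}B+O(|\lambda|^{-3}).
\end{equation*}

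I then integrate term by term over $\Gamma_R$, using the elementary identity
\begin{equation*}
\frac1{2\pi i}\oint_{\Gamma_R}\lambda^{-k}\,d\lambda=
\begin{cases}1,&k=1,\\ 0,&k\in\mathbb Z,\;k\ne1,\end{cases}
\end{equation*}
together with the standard estimate $\bigl\lVert\frac1{2\pi i}\oint_{\Gamma_R}g(\lambda)\,d\lambda\bigr\rVert\le R\cdot\max_{|\lambda|=R}\lVert g(\lambda)\rVert$ to handle the remainders. For $U(0)$ every surviving term has $k\ge2$, so the integral vanishes and $U(0)=\mathbf0$; for $U'(0)$ only the $\lambda^{-1}\mathbf1$ term contributes, yielding $U'(0)=\mathbf1$.

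There is no real obstacle here: the only point requiring care is the justification that the Neumann remainder $R(\lambda)$ is of order $|\lambda|^{-4}$ uniformly on $\Gamma_R$, so that its contour integral tends to $\mathbf0$ as $R\to\infty$ (or, equivalently, vanishes for any one sufficiently large $R$ after bounding by $R\cdot O(R^{-4})=O(R^{-3})$ and noting that the value of the integral is independent of $R$). Everything else is routine complex analysis.
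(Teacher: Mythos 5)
Your proof is correct and follows essentially the same route as the paper: deform the contour outward and read off the coefficient of $\lambda^{-1}$ in the Laurent expansion of $R_\lambda$ (resp.\ $\lambda R_\lambda$) at infinity. The paper packages this more tersely as a residue at infinity, writing $U(0)=\Res_{\lambda=\infty}R_\lambda=\lim_{\lambda\to\infty}\lambda R_\lambda$ and $U'(0)=\Res_{\lambda=\infty}\lambda R_\lambda=\lim_{\lambda\to\infty}\lambda^2 R_\lambda$, whereas you make the Neumann expansion and term-by-term integration explicit; the substance is identical.
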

\begin{proof}
We have (the symbol $\Res$ means residue)
\begin{align*}
\lim_{\lambda\to\infty}R_\lambda&=
\lim_{\lambda\to\infty}\frac1{\lambda^2}\Bigl(\mathbf1+\frac1\lambda B+\frac1{\lambda^2}C\Bigr)^{-1}=\mathbf0,\\
\lim_{\lambda\to\infty}\lambda R_\lambda&=
\lim_{\lambda\to\infty}\frac1{\lambda}\Bigl(\mathbf1+\frac1\lambda B+\frac1{\lambda^2}C\Bigr)^{-1}=\mathbf0,\\
U(0)&=\frac1{2\pi i}\int_\Gamma(\lambda^2\mathbf1+\lambda B+C)^{-1}\,d\lambda\\
&=\Res_{\lambda=\infty}(\lambda^2\mathbf1+\lambda B+C)^{-1}\\
&=\lim_{\lambda\to\infty}\lambda(\lambda^2\mathbf1+\lambda B+C)^{-1}\\
&=\lim_{\lambda\to\infty}\frac1\lambda\Bigl(\mathbf1+\frac1\lambda B+\frac1{\lambda^2}C\Bigr)^{-1}=\mathbf0,\\
U'(0)&=\frac1{2\pi i}\int_\Gamma\lambda(\lambda^2\mathbf1+\lambda B+C)^{-1}\,d\lambda\\
&=\Res_{\lambda=\infty}\lambda(\lambda^2\mathbf1+\lambda B+C)^{-1}\\
&=\lim_{\lambda\to\infty}\lambda^2(\lambda^2\mathbf1+\lambda B+C)^{-1}\\
&=\lim_{\lambda\to\infty}\Bigl(\mathbf1+\frac1\lambda B+\frac1{\lambda^2}C\Bigr)^{-1}=\mathbf1.\qed
\end{align*}
\renewcommand\qed{}
\end{proof}

\begin{theorem}[{\rm see, e.g., \cite[Theorem 16]{Kurbatova-POMI:eng}}]
\label{t:sol of ini val prob:2}
Let $f:\,\mathbb R\to\mathbb C^n$ be a continuous function. Then the solution
$x$ of the initial value problem
\begin{equation}\label{e:ini value problem}
\begin{split}
x''(t)+Bx'(t)+Cx(t)&=f(t),\qquad t\in\mathbb R,\\
x(0)&=u_0,\\
x'(0)&=u_1
\end{split}
\end{equation}
and its derivative can be represented in the form
\begin{align}
x(t)&=U'(t)u_0+U(t)(u_1+Bu_0)+\int_0^tU(t-s)f(s)\,ds,&&t\in\mathbb R,\label{e:x(t):E is inv}\\
x'(t)&=U'(t)u_1-U(t)Cu_0+\int_0^tU'(t-s)f(s)\,ds,&&t\in\mathbb R.\notag
\end{align}
\end{theorem}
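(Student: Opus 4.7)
My plan is to verify the theorem by direct substitution, relying on uniqueness of solutions to the IVP (standard for linear ODEs with continuous inhomogeneity). The heart of the argument is that $U$ plays the role of a fundamental solution; more precisely, since $R_\lambda$ is a two-sided inverse of $\lambda^2\mathbf 1+\lambda B+C$, differentiating~\eqref{e:U(t)} twice under the integral sign and using $\frac1{2\pi i}\int_\Gamma e^{\lambda t}\,d\lambda=\mathbf 0$ (by Cauchy's theorem, since $e^{\lambda t}$ is entire) yields the two companion identities
\begin{equation*}
U''(t)+BU'(t)+CU(t)=\mathbf 0,\qquad U''(t)+U'(t)B+U(t)C=\mathbf 0.
\end{equation*}
Evaluating the first at $t=0$ and combining with Proposition~\ref{p:U(0)} gives the third initial value $U''(0)=-B$.

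I would then denote the proposed right-hand side of~\eqref{e:x(t):E is inv} by $y(t)$ and differentiate. Because $U(0)=\mathbf 0$, the Leibniz rule produces
\begin{equation*}
y'(t)=U''(t)u_0+U'(t)(u_1+Bu_0)+\int_0^tU'(t-s)f(s)\,ds,
\end{equation*}
and the second identity above lets me rewrite $U''(t)u_0+U'(t)Bu_0$ as $-U(t)Cu_0$, recovering the stated formula for $x'(t)$. Differentiating once more now contributes the boundary term $U'(0)f(t)=f(t)$, while every other term groups into an expression of the form $(U''+BU'+CU)(\cdot)$ and therefore vanishes by the first identity; the inhomogeneity $f(t)$ survives, giving the ODE. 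The initial conditions drop out at once: $y(0)=U'(0)u_0=u_0$ and $y'(0)=-Bu_0+u_1+Bu_0=u_1$.

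The only real subtlety, and essentially the main obstacle, is that $B$ and $C$ do not commute with $U(t)$ in general, so one must apply each of the two identities from the correct side: the second identity (with $B$ and $C$ on the right) converts the raw derivative of $y(t)$ into the symmetric form stated for $x'(t)$, while the first identity (with $B$ and $C$ on the left) is what allows the closure of the ODE. Once this bookkeeping is respected, the rest of the verification is mechanical.
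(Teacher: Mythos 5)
Your verification is correct. The paper itself does not supply a proof of Theorem~\ref{t:sol of ini val prob:2} — it simply cites the reference — so there is nothing in the text to compare your argument against; what you have written is a self-contained proof by direct substitution and is sound. The key observations are exactly the right ones: since $R_\lambda$ is a two-sided inverse of $\lambda^2\mathbf1+\lambda B+C$, the contour integral representation of $U$ satisfies both the left-coefficient identity $U''+BU'+CU=\mathbf0$ and the right-coefficient identity $U''+U'B+UC=\mathbf0$, each obtained from $\frac1{2\pi i}\int_\Gamma e^{\lambda t}\,d\lambda=\mathbf0$; the right identity (applied to the vector $u_0$) is what converts the raw derivative of the proposed $x(t)$ into the displayed formula for $x'(t)$, while the left identity closes the ODE after the second differentiation, with the boundary terms $U(0)f(t)=0$ and $U'(0)f(t)=f(t)$ handled by Proposition~\ref{p:U(0)}. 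Differentiation under the integral sign and of the convolution are both justified here because $U$ is $C^\infty$ (the contour is compact) and $f$ is continuous. You should also invoke uniqueness of solutions to the linear IVP explicitly, as you indicate in your opening sentence, to conclude that the exhibited $y$ is \emph{the} solution; that is a one-line appeal to the standard existence-and-uniqueness theorem for linear systems with continuous coefficients, and it is the only remaining step.
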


\begin{definition}
A matrix $X\in\mathbb C^{n\times n}$ is called~\cite[\S~3]{Krein-LangerI:eng} a (\emph{right}) \emph{solvent} of pencil~\eqref{e:pencil 2nd order} if $X$ satisfies the equation
\begin{equation*}
X^2+BX+C=\mathbf0.
\end{equation*}
A \emph{left solvent} is defined as a solution of the equation
\begin{equation*}
Y^2+YB+C=\mathbf0.
\end{equation*}
In this paper we deal only with right solvents.
\end{definition}
Not every quadratic pencil has at least one solvent, see Example~\ref{ex:pencil with Jord} below.

\begin{definition}
A \emph{factorization} of pencil~\eqref{e:pencil 2nd order} is its representation as the product of two linear pencils:
\begin{equation}\label{e:factorization}
\lambda^2\mathbf1+\lambda B+C=(\lambda\mathbf1-Y)(\lambda\mathbf1-X),
\end{equation}
where $X,Y\in\mathbb C^{n\times n}$.
\end{definition}

\begin{proposition}\label{p:spectrum factor}
Let pencil~\eqref{e:pencil 2nd order} possess factorization~\eqref{e:factorization}. Then
\begin{equation*}
\sigma(B,C)=\sigma(X)\cup\sigma(Y).
\end{equation*}
\end{proposition}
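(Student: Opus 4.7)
The plan is to read off the spectrum directly from the factorization~\eqref{e:factorization}, using the elementary fact that a product of two square matrices of the same size is invertible if and only if both factors are invertible.

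More concretely, I would first fix $\lambda\in\mathbb C$ and observe that, by~\eqref{e:factorization},
\begin{equation*}
L(\lambda)=\lambda^2\mathbf1+\lambda B+C=(\lambda\mathbf1-Y)(\lambda\mathbf1-X).
\end{equation*}
Since both factors live in $\mathbb C^{n\times n}$, their product is invertible if and only if each is invertible: indeed, if $(\lambda\mathbf1-Y)(\lambda\mathbf1-X)$ has inverse $M$, then $M(\lambda\mathbf1-Y)$ is a left inverse of $\lambda\mathbf1-X$ and $(\lambda\mathbf1-X)M$ is a right inverse of $\lambda\mathbf1-Y$, and for square matrices one-sided inverses are genuine inverses. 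The converse is trivial. Hence $\lambda\in\rho(B,C)$ if and only if $\lambda\in\rho(X)$ and $\lambda\in\rho(Y)$, that is, $\rho(B,C)=\rho(X)\cap\rho(Y)$. Taking complements in $\mathbb C$ yields $\sigma(B,C)=\sigma(X)\cup\sigma(Y)$.

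There is no serious obstacle here; the only point that must not be skipped is the square-matrix hypothesis, which is what lets us conclude joint invertibility of the two linear factors from invertibility of their product. Everything else is a direct translation between the definitions of $\sigma(B,C)$ and $\sigma(X)$, $\sigma(Y)$.
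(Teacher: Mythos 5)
Your proof is correct and follows essentially the same route as the paper's. You package the key step as the crisp lemma that a product of square matrices is invertible if and only if each factor is, arguing via one-sided inverses; the paper unfolds this same fact into a three-part argument with kernels and images (if $\lambda\mathbf1-X$ has a kernel then so does $L(\lambda)$, if $\lambda\mathbf1-Y$ has deficient image then so does $L(\lambda)$, and conversely a non-zero kernel of $L(\lambda)$ forces one of the factors to be singular). Your version is a bit tidier but there is no substantive difference in the idea.
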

\begin{proof}
Let $\lambda\in\sigma(X)$. Then the kernel of the matrix $\lambda\mathbf1-X$ is non-zero. From formula~\eqref{e:factorization} it is seen that the kernel of the matrix $\lambda^2\mathbf1+\lambda B+C$ is also non-zero.

Now let $\lambda\in\sigma(Y)$. Then the image of the matrix $\lambda\mathbf1-Y$ does not coincide with the whole $\mathbb C^n$. Then from~\eqref{e:factorization} it is seen that the image of the matrix $\lambda^2\mathbf1+\lambda B+C$ also does not coincides with $\mathbb C^n$.

Finally, let $\lambda\in\sigma(B,C)$. Then the matrix $\lambda^2\mathbf1+\lambda B+C$ has a non-zero kernel. Let, for definiteness, a non-zero vector $h$ belong to $\Ker(\lambda^2\mathbf1+\lambda B+C)$. Then from~\eqref{e:factorization} it is seen that $(\lambda\mathbf1-Y)(\lambda\mathbf1-X)h=0$. Consequently, we have either $(\lambda\mathbf1-X)h=0$ or $(\lambda\mathbf1-Y)u=0$, where $u=(\lambda\mathbf1-X)h$ with $u\neq0$. In other words, either $\lambda\mathbf1-Y$ or $\lambda\mathbf1-X$ has a non-zero kernel.
\end{proof}

\begin{theorem}[{\rm\cite[p. 81]{Gantmakher59:eng},~\cite[p.~389]{Krein-LangerI:eng}}]\label{t:dual root}
Let $X$ be a right solvent of the equation
\begin{equation*}
X^2+BX+C=\mathbf 0.
\end{equation*}
Then
\begin{equation*}
\lambda^2\mathbf1+\lambda B+C=(\lambda\mathbf1+X+B)(\lambda\mathbf1-X).
\end{equation*}
\end{theorem}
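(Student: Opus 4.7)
The plan is to verify the identity by direct expansion of the right-hand side, using only the defining solvent relation $X^2 + BX + C = \mathbf{0}$ and the fact that the scalar $\lambda$ commutes with every matrix.

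More concretely, I would expand
\begin{equation*}
(\lambda\mathbf1+X+B)(\lambda\mathbf1-X) = \lambda^2\mathbf1 - \lambda X + \lambda X - X^2 + \lambda B - BX,
\end{equation*}
observe that the two $\lambda X$ terms cancel (this is where scalar-matrix commutativity is used, and it is the only structural point worth noting), and then regroup as
\begin{equation*}
\lambda^2\mathbf1 + \lambda B - (X^2 + BX).
\end{equation*}
Finally, I would substitute $X^2 + BX = -C$ from the solvent equation to obtain $\lambda^2\mathbf1 + \lambda B + C$, which is the left-hand side.

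There is no real obstacle: the identity is a one-line algebraic verification. The only thing to be careful about is the order of the factors — the noncommutativity of matrices means one cannot a priori swap $(\lambda\mathbf{1}+X+B)$ and $(\lambda\mathbf{1}-X)$, and indeed swapping them would instead produce a factorization involving a \emph{left} solvent. Presenting the product in the order given ensures that the cross-terms in $\lambda$ cancel cleanly and that the remaining quadratic-in-$X$ part is exactly $X^2 + BX$, matching the right solvent equation rather than the left one.
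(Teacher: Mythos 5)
Your proof is correct. The paper states Theorem~\ref{t:dual root} as a cited result (from Gantmacher and Kre\u{\i}n--Langer) without reproducing a proof, and your direct expansion — cancel the $\lambda X$ cross-terms, regroup to $\lambda^2\mathbf1+\lambda B-(X^2+BX)$, then substitute $X^2+BX=-C$ — is exactly the standard one-line verification that any source would give. Your remark about factor order is also well taken: putting $(\lambda\mathbf1-X)$ on the right is what lets the non-$\lambda$ part collapse to $X^2+BX$ rather than $X^2+XB$, which is precisely what distinguishes a right solvent from a left one.
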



\begin{definition}\label{def:separated roots}
A pair of right solvents $X$ and $Z$ is called \cite[p. 75]{Gohberg-Lancaster-Rodman:Matrix_polynomials} (cf.~\cite[p.~392]{Krein-LangerI:eng}) \emph{complete} if the matrix $X-Z$ is invertible.
\end{definition}

Even if a pencil has a right solvent, there is not always another right solvent $Z$ such that the pair $X$ and $Z$ is complete, see Example~\ref{ex:counterexample3} below.

Theorem~\ref{t:2.16} below and its Corollary~\ref{c:Perov} show that as soon as two right solvents form a complete pair, the calculation of matrices~\eqref{e:psi for square pencil} and~\eqref{e:chi for square pencil} is reduced to the calculation of two functions of linear pencils, which significantly simplifies the calculation of the matrices~\eqref{e:psi for square pencil} and~\eqref{e:chi for square pencil} or~\eqref{e:U(t)} and~\eqref{e:U'(t)}. This explains the importance of the problem of finding a complete pair.

\begin{lemma}\label{l:simp}
Let $X$ and $Z$ be arbitrary right solvents of pencil~\eqref{e:pencil 2nd order}. Then
\begin{equation}\label{e:(X^2-Z^2)+B(X-Z)=0}
(X^2-Z^2)+B(X-Z)=0.
\end{equation}
\end{lemma}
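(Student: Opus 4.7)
The plan is to simply subtract the two defining equations. Since $X$ is a right solvent, we have
\begin{equation*}
X^2 + BX + C = \mathbf{0},
\end{equation*}
and since $Z$ is a right solvent,
\begin{equation*}
Z^2 + BZ + C = \mathbf{0}.
\end{equation*}
Subtracting the second identity from the first makes the constant term $C$ cancel and regroups the remainder as $(X^2 - Z^2) + B(X - Z) = \mathbf{0}$, which is exactly the claim.

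There is no real obstacle here: the identity follows from the very definition of a right solvent and linearity, and in particular it does not require $X$ and $Z$ to commute (so one must resist the temptation to factor $X^2 - Z^2$ as $(X-Z)(X+Z)$ or $(X+Z)(X-Z)$, either of which would be false in general). The whole proof therefore consists of writing the two solvent equations and taking their difference.
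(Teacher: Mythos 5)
Your proof is correct and matches the paper's argument exactly: write the two solvent equations $X^2+BX+C=\mathbf0$ and $Z^2+BZ+C=\mathbf0$ and subtract. The added remark about not factoring $X^2-Z^2$ in the noncommutative setting is a sensible caution but not needed for the proof itself.
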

\begin{proof}
Subtracting $Z^2+BZ+C=\mathbf0$ from $X^2+BX+C=\mathbf0$, we arrive at~\eqref{e:(X^2-Z^2)+B(X-Z)=0}.
\end{proof}

\begin{proposition}[{\rm\cite[p. 393, Lemma 4.1]{Krein-LangerI:eng}}]\label{p:KL-L4.1}
Let $X$ and $Z$ be arbitrary right solvents of pencil~\eqref{e:pencil 2nd order}. Then for any $\lambda\in\rho(B,C)\cap\rho(X)\cap\rho(Z)$ the following identities hold{\rm:}
\begin{align*}
(\lambda^2\mathbf1+\lambda B+C)^{-1}(X-Z)
&=(\lambda\mathbf1-X)^{-1}(X-Z)(\lambda\mathbf1-Z)^{-1}\\
&=(\lambda\mathbf1-Z)^{-1}(X-Z)(\lambda\mathbf1-X)^{-1}\\
&=(\lambda\mathbf1-X)^{-1}-(\lambda\mathbf1-Z)^{-1}.
\end{align*}
\end{proposition}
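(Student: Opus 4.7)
My approach is to separate the three equalities into two that are essentially pure linear algebra and one that genuinely uses the solvent hypothesis. The last equality will come out as an instance of the classical second-resolvent identity applied to the matrices $X$ and $Z$ individually; the same manipulation will show that the middle product (with $X$ and $Z$ swapped around $X-Z$) also collapses to the same difference $(\lambda\mathbf1-X)^{-1}-(\lambda\mathbf1-Z)^{-1}$, and so will also establish the second equality. Neither of these two steps uses the fact that $X$ and $Z$ solve $X^2+BX+C=\mathbf0$.

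For the last equality I would insert the trivial identity $(\lambda\mathbf1-Z)-(\lambda\mathbf1-X)=X-Z$ into the middle of either triple product and let the resolvents telescope; one line of bookkeeping suffices in each case.

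For the first equality, which is the substantive one, I would start from Theorem~\ref{t:dual root}, which gives the factorization
\[
\lambda^2\mathbf1+\lambda B+C=(\lambda\mathbf1+X+B)(\lambda\mathbf1-X).
\]
Inverting and multiplying by $X-Z$, the target identity becomes, after the common left factor $(\lambda\mathbf1-X)^{-1}$ is stripped off, the intertwining relation
\[
(\lambda\mathbf1+X+B)(X-Z)=(X-Z)(\lambda\mathbf1-Z).
\]
Expanding both sides, the $\lambda(X-Z)$ terms and the $XZ$ terms cancel, and what remains is exactly $(X^2-Z^2)+B(X-Z)=\mathbf0$, which is the content of Lemma~\ref{l:simp}.

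The main obstacle is conceptual rather than computational: one has to recognize that Theorem~\ref{t:dual root} lets us replace the quadratic resolvent by a pair of linear ones, and that the ``leftover'' intertwining relation is precisely the one Lemma~\ref{l:simp} already provides. Once this reduction is in place, every step is a one-line expansion, and no finer algebraic input (say, about commutativity of $X$ and $Z$, which need not hold) is required.
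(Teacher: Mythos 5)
Your proof is correct and follows essentially the same route as the paper's: Theorem~\ref{t:dual root} plus Lemma~\ref{l:simp} for the first equality, and the telescoping insertion of $(\lambda\mathbf1-Z)-(\lambda\mathbf1-X)=X-Z$ for the last. For the first equality you work backward to the intertwining relation $(\lambda\mathbf1+X+B)(X-Z)=(X-Z)(\lambda\mathbf1-Z)$, whereas the paper computes forward from $(\lambda^2\mathbf1+\lambda B+C)(\lambda\mathbf1-X)^{-1}(X-Z)$ down to $(X-Z)(\lambda\mathbf1-Z)$; the algebra is identical either way, though your version implicitly uses that $\lambda\mathbf1+X+B$ is invertible for $\lambda\in\rho(B,C)\cap\rho(X)$ (which does hold, as it equals $(\lambda^2\mathbf1+\lambda B+C)(\lambda\mathbf1-X)^{-1}$). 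The one genuine divergence is the middle equality: the paper derives it from the first by the symmetry $X\leftrightarrow Z$ (which also negates $X-Z$), while you observe that both orderings of the triple product telescope to the same difference and so equal each other by transitivity through the third expression. Both are one-line arguments; yours has the small pedagogical advantage of making explicit that the second and third equalities are pure linear algebra and carry no solvent information at all.
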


\begin{proof}
By Theorem~\ref{t:dual root}, for $\lambda\in\rho(X)$ we have
\begin{align*}
(\lambda^2\mathbf1+\lambda B+C)(\lambda\mathbf1-X)^{-1}
=\lambda\mathbf1+X+B.
\end{align*}
Hence, taking into account equality~\eqref{e:(X^2-Z^2)+B(X-Z)=0}, we obtain that
\begin{align*}
(\lambda^2\mathbf1&+\lambda B+C)(\lambda\mathbf1-X)^{-1}(X-Z)
=(\lambda\mathbf1+X+B)(X-Z)\\
&=\lambda (X-Z)+X(X-Z)+B(X-Z)\\
&=\lambda (X-Z)+X(X-Z)+Z^2-X^2\\
&=\lambda (X-Z)+Z^2-XZ\\
&=\lambda (X-Z)-(X-Z)Z\\
&=(X-Z)(\lambda\mathbf1-Z).
\end{align*}
Multiplying the resulting equality by $(\lambda^2\mathbf1+\lambda B+C)^{-1}$ on the left and by $(\lambda\mathbf1-Z)^{-1}$ on the right, we obtain the first formula. The second formula follows from the first one by symmetry.

To prove the last formula, we note that
\begin{multline*}
(\lambda\mathbf1-Z)^{-1}(X-Z)(\lambda\mathbf1-X)^{-1}\\
=(\lambda\mathbf1-Z)^{-1}\bigl((\lambda\mathbf1-Z)-(\lambda\mathbf1-X)\bigr)(\lambda\mathbf1-X)^{-1}\\
=\bigl(\mathbf1-(\lambda\mathbf1-Z)^{-1}(\lambda\mathbf1-X)\bigr)(\lambda\mathbf1-X)^{-1}\\
=(\lambda\mathbf1-X)^{-1}-(\lambda\mathbf1-Z)^{-1}.\qed
\end{multline*}
\renewcommand\qed{}
\end{proof}

The following theorem shows that every complete pair of right solvents generates a factorization of the pencil. The converse is not always true, see Example~\ref{ex:counterexample3} below.
\begin{theorem}[{\rm\cite[p. 75, Lemma 2.14]{Gohberg-Lancaster-Rodman:Matrix_polynomials}, \cite[pp. 26--27]{Perov-Kostrub20a:eng}}]\label{t:Perov}
Let the right solvents $X$ and $Z$ of pencil~\eqref{e:pencil 2nd order} form a complete pair. Then
\begin{align}
\lambda^2\mathbf1+\lambda B+C&=(X-Z)(\lambda\mathbf1-X)(X-Z)^{-1}(\lambda\mathbf1-Z),\notag\\
(\lambda^2\mathbf1+\lambda B+C)^{-1}
&=\bigl((\lambda\mathbf1-X)^{-1}-(\lambda\mathbf1-Z)^{-1}\bigr)(X-Z)^{-1}.\label{e:factor:Perov}
\end{align}
\end{theorem}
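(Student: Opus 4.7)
The plan is to derive the two identities from Proposition~\ref{p:KL-L4.1} and the defining relations for $X$ and $Z$, exploiting the invertibility of $X-Z$ that the complete pair assumption furnishes.

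For the second identity \eqref{e:factor:Perov}, I would start from the last formula in Proposition~\ref{p:KL-L4.1}, namely
$$(\lambda^2\mathbf1+\lambda B+C)^{-1}(X-Z) = (\lambda\mathbf1-X)^{-1}-(\lambda\mathbf1-Z)^{-1},$$
valid for $\lambda\in\rho(B,C)\cap\rho(X)\cap\rho(Z)$. Since $X-Z$ is invertible by hypothesis, I multiply on the right by $(X-Z)^{-1}$ and obtain~\eqref{e:factor:Perov} at once. By Proposition~\ref{p:spectrum factor} applied to the factorization we shall establish below, the singular sets of both sides agree, so the identity holds wherever it makes sense.

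For the first identity, my preferred route is direct expansion. Writing $S=X-Z$, the right-hand side becomes
$$(\lambda\mathbf1-SXS^{-1})(\lambda\mathbf1-Z)=\lambda^2\mathbf1-\lambda(SXS^{-1}+Z)+SXS^{-1}Z,$$
so matching coefficients with $\lambda^2\mathbf1+\lambda B+C$ reduces the claim to two scalar-free identities:
\begin{align*}
\text{(i)}\quad & SXS^{-1}+Z=-B,\\
\text{(ii)}\quad & SXS^{-1}Z=C.
\end{align*}
Multiplying (i) on the right by $S$ turns it into $(X-Z)X+Z(X-Z)=-B(X-Z)$, i.e.\ $X^2-Z^2=-B(X-Z)$, which is exactly Lemma~\ref{l:simp}. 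Using the consequence $SXS^{-1}=-B-Z$ of (i), condition (ii) becomes $(-B-Z)Z=C$, i.e.\ $Z^2+BZ+C=\mathbf0$, which is the defining equation for the right solvent $Z$. Alternatively, one could simply invert the symmetric form $L(\lambda)^{-1}(X-Z)=(\lambda\mathbf1-Z)^{-1}(X-Z)(\lambda\mathbf1-X)^{-1}$ from Proposition~\ref{p:KL-L4.1}, rearrange, and take matrix inverses; this produces the first factorization directly.

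I do not expect any genuine obstacle here. The only thing to notice is the slightly non-obvious algebraic coincidence that the two conditions produced by expanding the quadratic factorization are precisely Lemma~\ref{l:simp} and the solvent equation for $Z$; once that is seen, everything else is routine. Since both sides of the first identity are matrix polynomials of degree two in $\lambda$, verifying them on any non-empty open set (or by coefficient matching, as above) suffices to conclude they agree for all $\lambda\in\mathbb C$.
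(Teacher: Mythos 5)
Your proposal is correct, and your treatment of the second identity matches the paper's intent exactly: the paper's proof is the one-liner ``both formulas follow from Proposition~\ref{p:KL-L4.1},'' and for~\eqref{e:factor:Perov} the only sensible reading is precisely your right-multiplication by $(X-Z)^{-1}$ of the last line of that proposition. For the first identity you offer two routes. Your preferred one — expanding $(\lambda\mathbf1-SXS^{-1})(\lambda\mathbf1-Z)$ with $S=X-Z$ and matching the $\lambda^1$ and $\lambda^0$ coefficients against $-B$ and $C$, then observing that (i) is equivalent (after right-multiplication by $S$) to Lemma~\ref{l:simp} and that (ii) then reduces to the solvent equation for $Z$ — is genuinely different from the paper's: it is a direct, coefficient-level verification that bypasses Proposition~\ref{p:KL-L4.1} entirely for this part, needing only Lemma~\ref{l:simp} and the definition of a right solvent. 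The alternative you sketch — inverting the form $(\lambda^2\mathbf1+\lambda B+C)^{-1}(X-Z)=(\lambda\mathbf1-Z)^{-1}(X-Z)(\lambda\mathbf1-X)^{-1}$ and left-multiplying by $X-Z$ — is the paper's intended argument and yields $(X-Z)(\lambda\mathbf1-X)(X-Z)^{-1}(\lambda\mathbf1-Z)$ directly. The coefficient-matching route is more elementary and self-contained (it makes no detour through resolvent identities valid only on an open set, and it gives the result for all $\lambda\in\mathbb C$ at once); the resolvent route is shorter given that Proposition~\ref{p:KL-L4.1} is already in hand and keeps all arguments in the same circle of ideas. One small remark: your appeal to Proposition~\ref{p:spectrum factor} to ``extend the singular sets'' is unnecessary here, since once the polynomial factorization is established for all $\lambda$, inversion automatically gives~\eqref{e:factor:Perov} on all of $\rho(B,C)$, which by Corollary~\ref{c:rho of a square pencil} coincides with $\rho(X)\cap\rho(Z)$.
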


The first formula gives rise to the factorization
\begin{equation}\label{e:factor}
\lambda^2 \mathbf1+\lambda B+C=(\lambda\mathbf1-Y)(\lambda\mathbf1-Z),
\end{equation}
where $Y=(X-Z)X(X-Z)^{-1}$.
The second one is an analogue of the partial fraction decomposition.

\begin{proof}
The both formulas follow from Proposition~\ref{p:KL-L4.1}.
\end{proof}

\begin{corollary}[{\rm\cite[p. 26]{Perov-Kostrub20a:eng}}]\label{c:rho of a square pencil}
Let the right solvents $X$ and $Z$ of pencil~\eqref{e:pencil 2nd order} form a complete pair. Then
\begin{equation*}
\rho(B,C)=\rho(X)\cap\rho(Z),\qquad
\sigma(B,C)=\sigma(X)\cup\sigma(Z).
\end{equation*}
\end{corollary}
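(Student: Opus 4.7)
The plan is to read off both identities directly from Theorem~\ref{t:Perov}, since $\rho(B,C)$ and $\sigma(B,C)$ are complementary subsets of $\mathbb C$, so it suffices to verify one and take complements for the other.

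First, for the inclusion $\rho(X)\cap\rho(Z)\subseteq\rho(B,C)$, I would take $\lambda\in\rho(X)\cap\rho(Z)$ and observe that the right-hand side of formula~\eqref{e:factor:Perov},
\[
\bigl((\lambda\mathbf1-X)^{-1}-(\lambda\mathbf1-Z)^{-1}\bigr)(X-Z)^{-1},
\]
is a well-defined matrix (using also that $X-Z$ is invertible by completeness). Theorem~\ref{t:Perov} asserts this matrix is exactly $(\lambda^2\mathbf1+\lambda B+C)^{-1}$, so $\lambda\in\rho(B,C)$.

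For the reverse inclusion $\sigma(X)\cup\sigma(Z)\subseteq\sigma(B,C)$, I would use the first identity of Theorem~\ref{t:Perov} and rewrite it as the factorization
\[
\lambda^2\mathbf1+\lambda B+C=(\lambda\mathbf1-Y)(\lambda\mathbf1-Z),\qquad Y:=(X-Z)X(X-Z)^{-1},
\]
which follows from the one-line computation $(X-Z)(\lambda\mathbf1-X)(X-Z)^{-1}=\lambda\mathbf1-Y$ (the scalar term commutes with any similarity). This is a factorization in the sense of Definition for formula~\eqref{e:factorization}, so Proposition~\ref{p:spectrum factor} gives $\sigma(B,C)=\sigma(Y)\cup\sigma(Z)$. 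Because $Y$ is similar to $X$, we have $\sigma(Y)=\sigma(X)$, yielding $\sigma(B,C)=\sigma(X)\cup\sigma(Z)$. Passing to complements produces the companion statement about resolvent sets.

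I do not anticipate a genuine obstacle: the whole corollary is essentially a repackaging of Theorem~\ref{t:Perov} together with Proposition~\ref{p:spectrum factor}. The only small point to handle cleanly is the rewriting of the factorization to expose the similar copy $Y$ of $X$, so that Proposition~\ref{p:spectrum factor} applies in the form stated.
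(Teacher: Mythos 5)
Your proof is correct and uses the same core argument as the paper: apply Proposition~\ref{p:spectrum factor} to the factorization from Theorem~\ref{t:Perov}, note that $Y=(X-Z)X(X-Z)^{-1}$ is similar to $X$ so $\sigma(Y)=\sigma(X)$, and pass to complements (De Morgan) for the resolvent-set identity. Your preliminary step establishing $\rho(X)\cap\rho(Z)\subseteq\rho(B,C)$ via formula~\eqref{e:factor:Perov} is sound but redundant, since Proposition~\ref{p:spectrum factor} already yields the full equality $\sigma(B,C)=\sigma(Y)\cup\sigma(Z)$ and hence both inclusions at once.
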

\begin{proof}
By the De Morgan formulas, these identities are equivalent.

From Proposition~\ref{p:spectrum factor} and formula~\eqref{e:factor} it follows that
\begin{equation*}
\sigma(B,C)=\sigma(Z)\cup\sigma(Y),
\end{equation*}
where $Y=(X-Z)X(X-Z)^{-1}$. Since $Y$ and $X$ are similar, we have $\sigma(Y)=\sigma(X)$.
\end{proof}

\begin{theorem}[{\rm sufficient condition of completeness, \cite[Lemma 4.2]{Krein-LangerI:eng}, \cite[p. 76, Theorem 2.15]{Gohberg-Lancaster-Rodman:Matrix_polynomials}}]\label{t:2.15}
Let $X$ and $Z$ be arbitrary right solvents of pencil~\eqref{e:pencil 2nd order}. If $\sigma(X)\cap\sigma(Z)=\varnothing$, then the solvents $X$ and $Z$ form a complete pair.
\end{theorem}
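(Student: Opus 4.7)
I plan to proceed by contradiction: suppose $X-Z$ is not invertible and exhibit a common eigenvalue of $X$ and $Z$. Set $W:=\Ker(X-Z)$, which is nonzero because $X-Z$ fails to be invertible on the finite-dimensional space $\mathbb C^n$. By construction $Xv=Zv$ for every $v\in W$, so it suffices to find an eigenvector of $X$ inside $W$: the corresponding eigenvalue will automatically belong to $\sigma(X)\cap\sigma(Z)$.

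The decisive step will be to show that $W$ is $X$-invariant. This is the only place the solvent equations enter, and they enter through Lemma~\ref{l:simp}. For $v\in W$, that lemma gives $(X^2-Z^2)v=-B(X-Z)v=0$, hence $X^2v=Z^2v$. Since $Zv=Xv$, one can rewrite $Z^2v=Z(Zv)=Z(Xv)$, and combining the two yields $X(Xv)=Z(Xv)$, i.e.\ $Xv\in W$.

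With $X$-invariance in hand the conclusion is routine: since $W$ is a nonzero $X$-invariant subspace of $\mathbb C^n$, the restriction $X|_W$ has some eigenvalue $\lambda_0$ with eigenvector $w\in W$. Then $\lambda_0\in\sigma(X)$, and $Zw=Xw=\lambda_0 w$ gives $\lambda_0\in\sigma(Z)$, contradicting $\sigma(X)\cap\sigma(Z)=\varnothing$.

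The main obstacle, such as there is one, will be spotting the $X$-invariance of $W$; once one sees how to feed $(X-Z)v=0$ back into $(X^2-Z^2)v=0$, the argument unwinds cleanly. I would avoid the superficially attractive alternative of rewriting Lemma~\ref{l:simp} as the Sylvester-type equation $(X+B)(X-Z)=-(X-Z)Z$ and invoking a standard disjoint-spectrum result, since that logic runs in the wrong direction—it would force $X-Z=\mathbf0$ rather than $X-Z$ being invertible.
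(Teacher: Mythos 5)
Your proof is correct and takes a genuinely different route from the paper's. The paper's proof appeals to the resolvent identity of Proposition~\ref{p:KL-L4.1}: for $x\in\Ker(X-Z)$ it observes that $(\lambda\mathbf1-X)^{-1}x$ and $(\lambda\mathbf1-Z)^{-1}x$ agree on $\rho(X)\cap\rho(Z)$, hence, by the disjointness of the spectra, glue to an entire $\mathbb C^n$-valued function vanishing at infinity; Liouville's theorem forces it to be identically zero and then $x=0$. You instead use only the subtraction identity of Lemma~\ref{l:simp} to show that $W=\Ker(X-Z)$ is $X$-invariant, and then extract an eigenvector of $X|_W$; since $X$ and $Z$ agree on $W$, the associated eigenvalue lies in $\sigma(X)\cap\sigma(Z)$, a contradiction. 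Your version is more elementary (pure linear algebra, no resolvent formula, no complex analysis) and bypasses Proposition~\ref{p:KL-L4.1} entirely; the trade-off is that it relies on the existence of an eigenvalue for $X|_W$, which is automatic in $\mathbb C^{n\times n}$ but is precisely what fails in the infinite-dimensional setting of the cited Krein--Langer source, where the Liouville-type argument still applies. Your closing remark about $(X+B)(X-Z)=-(X-Z)Z$ is also sound: the disjoint-spectrum Sylvester theorem asserts uniqueness of the zero solution of a homogeneous Sylvester equation, which is the wrong direction for establishing invertibility of a particular $X-Z$.
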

\begin{proof}
Let us take an arbitrary vector $x\in\Ker(X-Z)$. By the formula
\begin{equation*}
(\lambda^2\mathbf1+\lambda B+C)^{-1}(X-Z)
=(\lambda\mathbf1-X)^{-1}-(\lambda\mathbf1-Z)^{-1}
\end{equation*}
from Proposition~\ref{p:KL-L4.1}, we have
\begin{equation*}
(\lambda\mathbf1-X)^{-1}x=(\lambda\mathbf1-Z)^{-1}x,\qquad \lambda\in\rho(X)\cap\rho(Z).
\end{equation*}
We consider the functions $h_1(\lambda)=(\lambda\mathbf1-X)^{-1}x$ and $h_2(\lambda)=(\lambda\mathbf1-Z)^{-1}x$ with the domains $\rho(X)$ and $\rho(Z)$  respectively. Since $\sigma(X)\cap\sigma(Z)=\varnothing$, the functions $h_1$ and $h_2$ are holomorphic continuations of each other to the whole complex plane. Moreover, they tend to zero at infinity. Consequently, by the Liouville theorem~\cite[ch.~I, \S~4.12]{Naimark72:eng}, the functions $h_1$ and $h_2$ are identically zero. But this implies that $x=0$. Hence the matrix $X-Z$ is invertible.
\end{proof}

The main application of complete pairs is described in Theorem~\ref{t:2.16}. It reduces the calculation of a function of a quadratic pencil to the calculation of the same function of two linear pencils, which essentially simplifies practical calculations.

\begin{theorem}[{\rm cf. \cite[p. 77, Theorem 2.16]{Gohberg-Lancaster-Rodman:Matrix_polynomials}, \cite[p. 28]{Perov-Kostrub20a:eng}}]\label{t:2.16}
Let right solvents $X$ and $Z$ form a complete pair for pencil~\eqref{e:pencil 2nd order}. Then 
\begin{align*}
\psi(f)
&=\biggl(\frac1{2\pi i}\int_\Gamma f(\lambda)(\lambda\mathbf1-X)^{-1}\,d\lambda\\
&-\frac1{2\pi i}\int_\Gamma f(\lambda)(\lambda\mathbf1-Z)^{-1}\,d\lambda\biggr)(X-Z)^{-1}.
\end{align*}
\end{theorem}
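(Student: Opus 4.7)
The plan is to substitute the partial-fraction identity~\eqref{e:factor:Perov} from Theorem~\ref{t:Perov} directly into the integral defining $\psi(f)$, and then split the resulting integral into two pieces, pulling the constant factor $(X-Z)^{-1}$ out from under the integral sign.

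Before doing so, I would verify that the contour $\Gamma$ that surrounds $\sigma(B,C)$ is admissible for the two separate integrals on the right-hand side, i.e. that $\Gamma$ lies in $\rho(X)\cap\rho(Z)$. This is where Corollary~\ref{c:rho of a square pencil} enters: because $X$ and $Z$ form a complete pair, one has $\rho(B,C)=\rho(X)\cap\rho(Z)$, so any contour that avoids $\sigma(B,C)$ automatically avoids both $\sigma(X)$ and $\sigma(Z)$. Moreover, $\Gamma$ encloses $\sigma(B,C)=\sigma(X)\cup\sigma(Z)$, hence it encloses each of $\sigma(X)$ and $\sigma(Z)$ individually.

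With this in place, the computation is routine. Starting from~\eqref{e:psi for square pencil} and inserting~\eqref{e:factor:Perov}, we obtain
\begin{align*}
\psi(f)&=\frac{1}{2\pi i}\int_\Gamma f(\lambda)(\lambda^2\mathbf{1}+\lambda B+C)^{-1}\,d\lambda\\
&=\frac{1}{2\pi i}\int_\Gamma f(\lambda)\bigl((\lambda\mathbf{1}-X)^{-1}-(\lambda\mathbf{1}-Z)^{-1}\bigr)(X-Z)^{-1}\,d\lambda.
\end{align*}
Since $(X-Z)^{-1}$ does not depend on $\lambda$, it factors out on the right, and by linearity the remaining integrand splits, yielding the desired formula.

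I do not anticipate a genuine obstacle. The only point requiring care is the justification that $\Gamma$ simultaneously serves all three integrals, which is handled by Corollary~\ref{c:rho of a square pencil}; the rest is essentially algebraic manipulation using the already-established partial-fraction decomposition.
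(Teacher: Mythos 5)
Your proof is correct and follows essentially the same route as the paper: substitute the partial-fraction identity~\eqref{e:factor:Perov} into the definition~\eqref{e:psi for square pencil} and split the integral, pulling $(X-Z)^{-1}$ out. The paper states this in one line; you additionally make explicit, via Corollary~\ref{c:rho of a square pencil}, why the contour $\Gamma$ is simultaneously admissible for all three integrals, which is a legitimate point the paper leaves implicit.
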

\begin{proof}
It follows from definition~\eqref{e:psi for square pencil} and formula~\eqref{e:factor:Perov}.
\end{proof}

\begin{corollary}\label{c:Perov}
Let right solvents $X$ and $Z$ form a complete pair of pencil~\eqref{e:pencil 2nd order}. Then for functions~\eqref{e:U(t)} and~\eqref{e:U'(t)} {\rm(}participated in the formulation of Theorem~\ref{t:sol of ini val prob:2}{\rm)} the following representations hold{\rm:}
\begin{align*}
U(t)&=\bigl(e^{Xt}-e^{Zt}\bigr)(X-Z)^{-1},\\
U'(t)&=\bigl(Xe^{Xt}-Ze^{Zt}\bigr)(X-Z)^{-1}.
\end{align*}
\end{corollary}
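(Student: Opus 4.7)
The plan is to apply Theorem~\ref{t:2.16} twice, once with $f(\lambda)=e^{\lambda t}$ to produce $U(t)=\psi(\exp_t)$, and once with the auxiliary function $\widetilde f(\lambda)=\lambda e^{\lambda t}$ to produce $U'(t)=\chi(\exp_t)$, and then to recognize the resulting contour integrals over linear resolvents as values of the holomorphic (Riesz--Dunford) functional calculus of $X$ and $Z$.

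For the first formula, I substitute $f(\lambda)=e^{\lambda t}$ directly into Theorem~\ref{t:2.16}:
\begin{equation*}
U(t)=\Bigl(\tfrac{1}{2\pi i}\int_\Gamma e^{\lambda t}(\lambda\mathbf1-X)^{-1}\,d\lambda-\tfrac{1}{2\pi i}\int_\Gamma e^{\lambda t}(\lambda\mathbf1-Z)^{-1}\,d\lambda\Bigr)(X-Z)^{-1}.
\end{equation*}
Here I need to know that the contour $\Gamma$, which by construction encloses $\sigma(B,C)$, also encloses $\sigma(X)$ and $\sigma(Z)$ separately; but this is precisely Corollary~\ref{c:rho of a square pencil}, which gives $\sigma(B,C)=\sigma(X)\cup\sigma(Z)$. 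The two integrals are then the standard Cauchy integral representations of $e^{Xt}$ and $e^{Zt}$, and the desired formula for $U(t)$ follows.

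For the second formula, I observe from the definitions~\eqref{e:psi for square pencil} and~\eqref{e:chi for square pencil} that $\chi(\exp_t)=\psi(\widetilde f)$ with $\widetilde f(\lambda)=\lambda e^{\lambda t}$. Applying Theorem~\ref{t:2.16} to $\widetilde f$, I obtain an analogous expression in which the integrands are $\lambda e^{\lambda t}(\lambda\mathbf1-X)^{-1}$ and $\lambda e^{\lambda t}(\lambda\mathbf1-Z)^{-1}$. Since the scalar function $\lambda\mapsto\lambda e^{\lambda t}$ is entire, the functional calculus gives
\begin{equation*}
\tfrac{1}{2\pi i}\int_\Gamma \lambda e^{\lambda t}(\lambda\mathbf1-X)^{-1}\,d\lambda = Xe^{Xt},\qquad \tfrac{1}{2\pi i}\int_\Gamma \lambda e^{\lambda t}(\lambda\mathbf1-Z)^{-1}\,d\lambda = Ze^{Zt},
\end{equation*}
which can also be seen elementarily by writing $\lambda(\lambda\mathbf1-X)^{-1}=\mathbf1+X(\lambda\mathbf1-X)^{-1}$ and noticing that the integral of the entire function $e^{\lambda t}\mathbf1$ around the closed contour vanishes. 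Combining these yields the stated expression for $U'(t)$.

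There is no real obstacle: the routine technical point is the verification that $\Gamma$ is an admissible contour for the functional calculus of both $X$ and $Z$, and this is already covered by Corollary~\ref{c:rho of a square pencil}. Everything else is a direct substitution into Theorem~\ref{t:2.16}.
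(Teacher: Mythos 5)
Your proposal is correct and follows the same route as the paper: the paper's one-line proof simply cites Theorems~\ref{t:sol of ini val prob:2} and~\ref{t:2.16}, and you have filled in exactly the expected details — substituting $f=\exp_t$ and $\widetilde f(\lambda)=\lambda e^{\lambda t}$ into Theorem~\ref{t:2.16}, using Corollary~\ref{c:rho of a square pencil} to justify that $\Gamma$ encircles $\sigma(X)$ and $\sigma(Z)$, and identifying the resulting contour integrals as $e^{Xt}$, $e^{Zt}$, $Xe^{Xt}$, $Ze^{Zt}$ via the Riesz--Dunford calculus.
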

\begin{proof}
It follow from Theorems~\ref{t:sol of ini val prob:2} and~\ref{t:2.16}.
\end{proof}

\section{Jordan chains}\label{s:Linearization}

Let us consider matrix pencil~\eqref{e:pencil 2nd order}.
The block matrix
\begin{equation*}
\mathcal C_1=\begin{pmatrix}
\mathbf0 & \mathbf1 \\
-C & -B
\end{pmatrix}
\end{equation*}
is called~\cite[p.~14]{Gohberg-Lancaster-Rodman:Matrix_polynomials} the \emph{companion} matrix of pencil~\eqref{e:pencil 2nd order}.

 \begin{remark}\label{r:matrix representation}
It is well-know that differential equation~\eqref{e:DE 2nd order}
can be reduced to a system of two first order equations by the change $y_0=x$, $y_1=x'$:
\begin{align*}
y_0'(t)&=y_1(t),\\
y'_1(t)&=-Cy_0(t)-By_1(t)+f(t).
\end{align*}
The matrix $\mathcal C_1$ is exactly the matrix coefficient of this system.
 \end{remark}

\begin{proposition}[{\rm see also~\cite{Kostrub23:eng-rus}}]\label{p:R of C_1}
The resolvent of the matrix $\mathcal C_1$ is defined for all $\lambda\notin\sigma(B,C)$
and can be represented in the form
\begin{equation*}
(\lambda\mathbf1-\mathcal C_1)^{-1}
=(\lambda^2\mathbf1+\lambda B+C)^{-1}\begin{pmatrix}
\lambda\mathbf1+B & \mathbf1\\
-C & \lambda\mathbf1
\end{pmatrix}.
\end{equation*}
\end{proposition}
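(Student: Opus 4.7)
The plan is straightforward verification by block multiplication: I would simply check that the product of $\lambda\mathbf1-\mathcal C_1$ with the candidate right-hand side equals the $2n\times 2n$ identity. Since $\mathcal C_1$ has size $2n\times 2n$, a one-sided inverse will automatically be a two-sided inverse, so one direction of the check suffices.

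Concretely, I would first write
\begin{equation*}
\lambda\mathbf1-\mathcal C_1=\begin{pmatrix}\lambda\mathbf1 & -\mathbf1\\ C & \lambda\mathbf1+B\end{pmatrix},
\end{equation*}
and then factor the candidate inverse as the scalar-in-$\lambda$ factor $R_\lambda=(\lambda^2\mathbf1+\lambda B+C)^{-1}$ times the block matrix
\begin{equation*}
M(\lambda)=\begin{pmatrix}\lambda\mathbf1+B & \mathbf1\\ -C & \lambda\mathbf1\end{pmatrix}.
\end{equation*}
This factorization is the key move, because it lets me postpone multiplication by $R_\lambda$ until the very end and thereby avoid any worry about whether $R_\lambda$ commutes with $B$ or $C$.

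Next I would compute $M(\lambda)\,(\lambda\mathbf1-\mathcal C_1)$ block by block. The top-left block gives $(\lambda\mathbf1+B)\lambda\mathbf1+C=\lambda^2\mathbf1+\lambda B+C$; the top-right block gives $-(\lambda\mathbf1+B)+(\lambda\mathbf1+B)=\mathbf0$; the bottom-left block gives $-\lambda C+\lambda C=\mathbf0$; and the bottom-right block gives $C+\lambda(\lambda\mathbf1+B)=\lambda^2\mathbf1+\lambda B+C$. Thus
\begin{equation*}
M(\lambda)\,(\lambda\mathbf1-\mathcal C_1)=\bigl(\lambda^2\mathbf1+\lambda B+C\bigr)\begin{pmatrix}\mathbf1 & \mathbf0\\ \mathbf0 & \mathbf1\end{pmatrix},
\end{equation*}
interpreting the scalar block $\lambda^2\mathbf1+\lambda B+C$ as acting on each of the two block rows. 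Multiplying on the left by $R_\lambda$ therefore yields $\mathbf1_{2n}$, which is the desired identity.

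There is no real obstacle here; the only thing to be careful about is the order of multiplication, since $R_\lambda$ need not commute with $B$ or $C$ individually. Writing the candidate inverse as $R_\lambda M(\lambda)$ and multiplying $M(\lambda)$ against $\lambda\mathbf1-\mathcal C_1$ first sidesteps this issue completely, because inside the middle product only the scalar $\lambda$, the identity, and the matrices $B,C$ appear, and all of them commute with $\lambda^2\mathbf1+\lambda B+C$ in the trivial sense that $\lambda^2\mathbf1+\lambda B+C$ appears only as a scalar factor in front at the end. Finally, invertibility of $\lambda\mathbf1-\mathcal C_1$ (and hence $\lambda\notin\sigma(\mathcal C_1)$) follows automatically from the existence of a right inverse in finite dimensions.
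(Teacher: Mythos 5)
Your proof is correct and uses essentially the same approach as the paper: direct block multiplication showing that $(\lambda^2\mathbf1+\lambda B+C)^{-1}\begin{pmatrix}\lambda\mathbf1+B & \mathbf1\\ -C & \lambda\mathbf1\end{pmatrix}$ composed with $\lambda\mathbf1-\mathcal C_1$ gives the identity, which is exactly the paper's first displayed equality. Your invocation of ``one-sided inverse implies two-sided inverse in finite dimensions'' is a clean shortcut: the paper instead writes down a second block identity rather than appealing to this fact, but the substantive computation is identical.
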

\begin{proof}
The equality (provided that $\lambda\notin\sigma(B,C)$)
\begin{equation*}
(\lambda^2\mathbf1+\lambda B+C)^{-1}
\begin{pmatrix}
\lambda\mathbf1+B & \mathbf1\\
-C & \lambda\mathbf1
\end{pmatrix}
\begin{pmatrix}
\lambda\mathbf1 & -\mathbf1 \\
C & \lambda\mathbf1+B
\end{pmatrix}=
\begin{pmatrix}
\mathbf1 & \mathbf0 \\
\mathbf0 & \mathbf1
\end{pmatrix}
\end{equation*}
and
\begin{equation*}
\begin{pmatrix}
\lambda\mathbf1 & \mathbf1 \\
C & \lambda\mathbf1+B
\end{pmatrix}(\lambda^2\mathbf1+\lambda B+C)^{-1}
\begin{pmatrix}
\lambda\mathbf1+B & -\mathbf1\\
-C & \lambda\mathbf1
\end{pmatrix}
=
\begin{pmatrix}
\mathbf1 & \mathbf0 \\
\mathbf0 & \mathbf1
\end{pmatrix}
\end{equation*}
are verified by straightforward multiplication.
\end{proof}

\begin{corollary}[{\rm\cite[p. 15, Proposition 1.3]{Gohberg-Lancaster-Rodman:Matrix_polynomials}}]\label{c:res of C_1}
For all $\lambda\notin\sigma(B,C)$, we have
\begin{equation*}
(\lambda^2\mathbf1+\lambda B+C)^{-1}=
\begin{pmatrix}\mathbf1&\mathbf0\end{pmatrix}
(\lambda\mathbf1-\mathcal C_1)^{-1}
\begin{pmatrix}\mathbf0\\\mathbf1\end{pmatrix}.
\end{equation*}
\end{corollary}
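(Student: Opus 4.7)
The plan is to derive this corollary directly from Proposition~\ref{p:R of C_1}, which already expresses the full $2n \times 2n$ resolvent $(\lambda\mathbf1-\mathcal C_1)^{-1}$ as the product of the scalar (in block sense) factor $(\lambda^2\mathbf1+\lambda B+C)^{-1}$ with the $2\times 2$ block matrix $\begin{pmatrix}\lambda\mathbf1+B & \mathbf1\\ -C & \lambda\mathbf1\end{pmatrix}$. Written out block-wise, the $(i,j)$ entry of $(\lambda\mathbf1-\mathcal C_1)^{-1}$ is thus $(\lambda^2\mathbf1+\lambda B+C)^{-1}$ times the corresponding $(i,j)$ block of $\begin{pmatrix}\lambda\mathbf1+B & \mathbf1\\ -C & \lambda\mathbf1\end{pmatrix}$.

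First I would observe that multiplication on the left by the $n \times 2n$ block row $\begin{pmatrix}\mathbf1 & \mathbf0\end{pmatrix}$ projects a $2\times 2$ block matrix onto its first block row, while multiplication on the right by the $2n \times n$ block column $\begin{pmatrix}\mathbf0\\ \mathbf1\end{pmatrix}$ picks out the second block column. Applied together to $(\lambda\mathbf1-\mathcal C_1)^{-1}$, these operations extract precisely its $(1,2)$ block.

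Next I would compute that $(1,2)$ block using the formula of Proposition~\ref{p:R of C_1}: it equals $(\lambda^2\mathbf1+\lambda B+C)^{-1}\cdot\mathbf1=(\lambda^2\mathbf1+\lambda B+C)^{-1}$, which is the desired identity. Since Proposition~\ref{p:R of C_1} is valid for every $\lambda\notin\sigma(B,C)$, the same range of validity is inherited automatically.

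There is no real obstacle here; the statement is a one-line block-matrix extraction from Proposition~\ref{p:R of C_1}. The only thing to be slightly careful about is to interpret the product in Proposition~\ref{p:R of C_1} correctly as an $n\times n$ matrix multiplying each of the four $n\times n$ blocks, and to keep straight which block row and which block column the outer factors $\begin{pmatrix}\mathbf1 & \mathbf0\end{pmatrix}$ and $\begin{pmatrix}\mathbf0\\ \mathbf1\end{pmatrix}$ select.
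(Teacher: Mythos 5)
Your proof is correct and takes essentially the same approach as the paper: the paper's proof is the single line ``It follows from Proposition~\ref{p:R of C_1},'' and you have simply spelled out the block-extraction argument that makes this explicit.
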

\begin{proof}
It follows from Proposition~\ref{p:R of C_1}.
\end{proof}

We recall~\cite{Michel-Herget93} that a sequence of vectors $h_0$, $h_1$, \dots, $h_k$ ($h_0\neq0$) is called a \emph{Jordan chain} of \emph{length} $k+1$ for a square matrix $\mathcal A$, \emph{corresponding to} $\lambda_0\in\mathbb C$, if it satisfies the relations
\begin{equation}\label{e:J-chain of a matrix}
\begin{split}
\mathcal Ah_0&=\lambda_0h_0,\\
\mathcal Ah_1&=\lambda_0h_1+h_0,\\
\dots&\dots\dots\dots\dots,\\
\mathcal Ah_k&=\lambda_0h_k+h_{k-1}.
\end{split}
\end{equation}
We say that a Jordan chain is \emph{maximal} if it can not be extended to a larger Jordan chain.

\begin{definition}\label{def:Jordan chain}
A sequence of vectors $x_0$, $x_1$, \dots, $x_k\in\mathbb C^n$ ($x_0\neq0$)
is called~\cite[p. 377]{Krein-LangerI:eng}, \cite[p. 24]{Gohberg-Lancaster-Rodman:Matrix_polynomials} a \emph{Jordan chain} of \emph{length} $k+1$ for pencil~\eqref{e:pencil 2nd order}, \emph{corresponding to} $\lambda_0\in\mathbb C$, if it satisfies the equalities
\begin{align*}
L(\lambda_0)x_0&=0,\\
L(\lambda_0)x_1+(2\lambda_0\mathbf1+B)x_0&=0,\\
L(\lambda_0)x_2+(2\lambda_0\mathbf1+B)x_1+x_0&=0,\\
L(\lambda_0)x_3+(2\lambda_0\mathbf1+B)x_2+x_1&=0,\\
\dots\dots\dots\dots\dots\dots\dots\dots\dots\dots\dots&\dots\\
L(\lambda_0)x_k+(2\lambda_0\mathbf1+B)x_{k-1}+x_{k-2}&=0.
\end{align*}
Setting $x_{-1}=x_{-2}=0$, these equalities can be rewritten in the uniform way:
\begin{equation}\label{e:Jordan chain for L}
L(\lambda_0)x_i+(2\lambda_0\mathbf1+B)x_{i-1}+x_{i-2}=0,\qquad i=0,1,\dots,k.
\end{equation}
The vector $x_0$ is called an \emph{eigenvector}, all others vectors $x_i$ are called \emph{generalized} or \emph{associated} \emph{eigenvectors}.
\end{definition}

\begin{remark}\label{r:rel to DE}
It is straightforward to verify~\cite[p. 377]{Krein-LangerI:eng} that vectors $x_0$, $x_1$, \dots, $x_k$ form a Jordan chain for pencil~\eqref{e:pencil 2nd order} if and only if the functions
\begin{equation*}
t\mapsto\Bigl(\frac{t^i}{i!}x_0+\frac{t^{i-1}}{(i-1)!}x_1+\ldots+x_i\Bigr)e^{\lambda_0 t},\qquad i=0,1,\dots,k,
\end{equation*}
satisfy the homogeneous differential equation $x''(t)+Bx'(t)+Cx(t)=0$.
\end{remark}

\begin{proposition}[{\rm\cite[p. 378]{Krein-LangerI:eng}}]\label{p:p. 292}
Vectors $x_0$, $x_1$, \dots, $x_k$ form a Jordan chain for pencil~\eqref{e:pencil 2nd order} if and only if the vectors
\begin{equation*}
\begin{pmatrix}
x_0 \\
\lambda_0 x_0 \\
\end{pmatrix},\quad
\begin{pmatrix}
x_1 \\
\lambda_0 x_1+x_0 \\
\end{pmatrix},\quad\dots,\quad
\begin{pmatrix}
x_i \\
\lambda_0 x_i+x_{i-1} \\
\end{pmatrix},\quad\dots,\quad
\begin{pmatrix}
x_k \\
\lambda_0 x_k+x_{k-1} \\
\end{pmatrix}
\end{equation*}
form a Jordan chain for the companion matrix $\mathcal C_1$.
\end{proposition}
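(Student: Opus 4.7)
The plan is to prove this by direct computation, reducing the two Jordan chain conditions to an equivalent block identity. I would set $h_i=\begin{pmatrix}x_i\\ \lambda_0 x_i+x_{i-1}\end{pmatrix}$ for $i=0,1,\dots,k$ with the convention $x_{-1}=x_{-2}=\mathbf0$, so that the first vector in the list becomes $h_0=\begin{pmatrix}x_0\\ \lambda_0 x_0\end{pmatrix}$. The goal is to verify that the Jordan chain relations~\eqref{e:J-chain of a matrix} for $\mathcal{C}_1$, namely $\mathcal{C}_1 h_i = \lambda_0 h_i + h_{i-1}$ (with $h_{-1}=0$), are equivalent, row by row, to the pencil relations~\eqref{e:Jordan chain for L}.

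First I would compute $\mathcal{C}_1 h_i$ from the block definition of $\mathcal{C}_1$, obtaining
\begin{equation*}
\mathcal{C}_1 h_i=\begin{pmatrix}\lambda_0 x_i+x_{i-1}\\ -C x_i-\lambda_0 Bx_i-Bx_{i-1}\end{pmatrix},
\end{equation*}
and compare it with
\begin{equation*}
\lambda_0 h_i+h_{i-1}=\begin{pmatrix}\lambda_0 x_i+x_{i-1}\\ \lambda_0^2 x_i+2\lambda_0 x_{i-1}+x_{i-2}\end{pmatrix}.
\end{equation*}
The top components agree automatically, so the Jordan chain equation for $\mathcal{C}_1$ at index $i$ reduces to the bottom component identity, which rearranges exactly to
\begin{equation*}
L(\lambda_0)x_i+(2\lambda_0\mathbf1+B)x_{i-1}+x_{i-2}=0,
\end{equation*}
i.e.\ the $i$-th relation of Definition~\ref{def:Jordan chain}. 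Running this equivalence for each $i=0,1,\dots,k$ simultaneously gives both implications at once.

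Finally I would note the non-degeneracy condition: $h_0\neq 0$ if and only if $x_0\neq 0$, since the top block of $h_0$ is $x_0$, which matches the non-degeneracy convention in both definitions. No real obstacle is expected; the only point requiring a bit of care is the bookkeeping with the extended vectors $x_{-1}=x_{-2}=0$ and $h_{-1}=0$, ensuring that the base cases $i=0$ and $i=1$ are covered by the same uniform identity used for general $i$.
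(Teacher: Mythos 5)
Your proof is correct and carries out exactly the direct verification that the paper leaves implicit (the paper's proof is just the line ``The verification is straightforward''): substituting $h_i=\bigl(\begin{smallmatrix}x_i\\ \lambda_0 x_i+x_{i-1}\end{smallmatrix}\bigr)$ into $\mathcal C_1h_i=\lambda_0h_i+h_{i-1}$, observing that the top blocks agree automatically, and matching the bottom blocks with relation~\eqref{e:Jordan chain for L}. Your added remark on the non-degeneracy condition $h_0\neq0\Leftrightarrow x_0\neq0$ is a correct and welcome detail.
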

\begin{proof}
The verification is straightforward.
\end{proof}

The following proposition shows that Jordan chains of a pencil can be found as Jordan chains of its right solvents (which is a standard problem of linear algebra).
\begin{proposition}[{\rm\cite[Lemma 3.1]{Krein-LangerI:eng}}]\label{p:kem 3.1}
Any Jordan chain $x_0$, $x_1$, \dots, $x_k$ of a right solvent $X$ of pencil~\eqref{e:pencil 2nd order} is simultaneously a Jordan chain for pencil~\eqref{e:pencil 2nd order}, corresponding to the same $\lambda_0$.
\end{proposition}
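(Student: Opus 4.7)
The plan is to exploit the factorization provided by Theorem~\ref{t:dual root},
\[
\lambda^2\mathbf1+\lambda B+C=(\lambda\mathbf1+X+B)(\lambda\mathbf1-X),
\]
which turns the verification of~\eqref{e:Jordan chain for L} into a brief algebraic manipulation that treats all indices $i$ uniformly.

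First I would rephrase the Jordan chain relations~\eqref{e:J-chain of a matrix} for the matrix $X$ in the equivalent form
\[
(\lambda_0\mathbf1-X)x_i=-x_{i-1},\qquad i=0,1,\dots,k,
\]
with the convention $x_{-1}=x_{-2}=0$; this is nothing more than rewriting $Xx_0=\lambda_0 x_0$ and $Xx_i=\lambda_0 x_i+x_{i-1}$.

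Then, applying the factorization at $\lambda=\lambda_0$ to $x_i$ and using this reformulation, I would compute
\[
L(\lambda_0)x_i=(\lambda_0\mathbf1+X+B)(\lambda_0\mathbf1-X)x_i=-(\lambda_0\mathbf1+X+B)x_{i-1}.
\]
A second application of the chain relation in the form $Xx_{i-1}=\lambda_0 x_{i-1}+x_{i-2}$ on the right-hand side yields
\[
L(\lambda_0)x_i=-(2\lambda_0\mathbf1+B)x_{i-1}-x_{i-2},
\]
which is exactly the identity~\eqref{e:Jordan chain for L} required by Definition~\ref{def:Jordan chain}. The condition $x_0\neq0$ carries over automatically from the Jordan chain of $X$.

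I do not anticipate any real obstacle here: once Theorem~\ref{t:dual root} is available, the proof is a two-step algebraic computation. The only point requiring attention is the bookkeeping convention that negative-indexed vectors vanish, which is what makes the argument work uniformly at the boundary cases $i=0$ and $i=1$.
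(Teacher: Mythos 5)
Your proof is correct and follows essentially the same route as the paper: both invoke Theorem~\ref{t:dual root} to factor $L(\lambda_0)$ and then apply the rewritten chain relation $(\lambda_0\mathbf1-X)x_i=-x_{i-1}$ twice to reduce everything to~\eqref{e:Jordan chain for L}. The only difference is cosmetic (you start from $L(\lambda_0)x_i$ and build up, while the paper starts from the full left-hand side of~\eqref{e:Jordan chain for L} and collapses it to zero); the algebra is identical.
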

\begin{proof}
Let $x_0$, $x_1$, \dots, $x_k$ be a Jordan chain for a right solvent $X$. This means that $Xx_{i}=\lambda_0 x_{i}+x_{i-1}$, $i=0,1,2,\dots,k$, where $x_{-1}=0$. We rewrite the equality $Xx_{i}=\lambda_0 x_{i}+x_{i-1}$ in the form
\begin{equation*}
x_{i-1}=(X-\lambda_0\mathbf1)x_{i}.
\end{equation*}

It requires to verify~\eqref{e:Jordan chain for L}. We have (see Theorem~\ref{t:dual root})
\begin{multline*}
L(\lambda_0)x_i+(2\lambda_0\mathbf1+B)x_{i-1}+x_{i-2}\\
=L(\lambda_0)x_i+(2\lambda_0\mathbf1+B)x_{i-1}+(X-\lambda_0\mathbf1)x_{i-1}\\
=L(\lambda_0)x_i+(2\lambda_0\mathbf1+B+X-\lambda_0\mathbf1)x_{i-1}\\
=L(\lambda_0)x_i+(\lambda_0\mathbf1+B+X)x_{i-1}\\
=L(\lambda_0)x_i+(\lambda_0\mathbf1+B+X)(X-\lambda_0\mathbf1)x_{i}\\
=L(\lambda_0)x_i-L(\lambda_0)x_i=0.\qed
\end{multline*}
\renewcommand\qed{}
\end{proof}

\section{Finding a complete pair}\label{s:finding a comp pair}

We denote by $\Lin\mathcal A$ the subspace spanned by the columns of the matrix~$\mathcal A$.

\begin{theorem}[{\rm\cite[Lemma 5.1]{Krein-LangerII:eng}, \cite[p. 125, Theorem 4.6]{Gohberg-Lancaster-Rodman:Matrix_polynomials}}]\label{t:4.6}
A matrix $X$ is a right solvent of pencil~\eqref{e:pencil 2nd order} if and only if the subspace
\begin{equation*}
 \mathcal M=\Lin
\begin{pmatrix} \mathbf1\\X\end{pmatrix}
\end{equation*}
is invariant under the companion matrix $\mathcal C_1$.
\end{theorem}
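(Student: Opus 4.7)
The plan is to prove both directions by direct computation, exploiting that a generic element of $\mathcal M$ has the explicit form $\bigl(\begin{smallmatrix}v\\Xv\end{smallmatrix}\bigr)$ for $v\in\mathbb C^n$.

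For the forward direction, I assume $X^2+BX+C=\mathbf0$ and evaluate $\mathcal C_1$ on a typical element of $\mathcal M$:
\begin{equation*}
\mathcal C_1\begin{pmatrix}v\\Xv\end{pmatrix}
=\begin{pmatrix}\mathbf0&\mathbf1\\-C&-B\end{pmatrix}\begin{pmatrix}v\\Xv\end{pmatrix}
=\begin{pmatrix}Xv\\-Cv-BXv\end{pmatrix}.
\end{equation*}
Using the solvent equation in the form $-C-BX=X^2$, the bottom block becomes $X^2v=X(Xv)$, so the image equals $\bigl(\begin{smallmatrix}\mathbf1\\X\end{smallmatrix}\bigr)(Xv)\in\mathcal M$. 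Since $v$ was arbitrary, $\mathcal C_1\mathcal M\subseteq\mathcal M$.

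For the converse, I assume $\mathcal C_1\mathcal M\subseteq\mathcal M$. Then for every $v$ there exists $w\in\mathbb C^n$ such that
\begin{equation*}
\begin{pmatrix}Xv\\-Cv-BXv\end{pmatrix}
=\mathcal C_1\begin{pmatrix}v\\Xv\end{pmatrix}
=\begin{pmatrix}w\\Xw\end{pmatrix}.
\end{equation*}
The top block forces $w=Xv$, and substituting into the bottom block gives $-Cv-BXv=X^2v$, i.e.\ $(X^2+BX+C)v=0$. Since $v\in\mathbb C^n$ was arbitrary, $X^2+BX+C=\mathbf0$, so $X$ is a right solvent.

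There is essentially no obstacle here: the proof is a two-line block-matrix calculation in each direction, and the only algebraic content is the identification of the solvent equation $X^2+BX+C=\mathbf0$ with the condition that the bottom block of $\mathcal C_1\bigl(\begin{smallmatrix}v\\Xv\end{smallmatrix}\bigr)$ equals $X$ times its top block. I will simply record both computations.
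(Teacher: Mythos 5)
Your proof is correct and is essentially the paper's argument: the paper carries out the identical block computation at the matrix level, writing $\mathcal C_1\bigl(\begin{smallmatrix}\mathbf1\\X\end{smallmatrix}\bigr)=\bigl(\begin{smallmatrix}\mathbf1\\X\end{smallmatrix}\bigr)X$ for the forward direction and introducing a matrix $Q$ with $\mathcal C_1\bigl(\begin{smallmatrix}\mathbf1\\X\end{smallmatrix}\bigr)=\bigl(\begin{smallmatrix}\mathbf1\\X\end{smallmatrix}\bigr)Q$ for the converse, whereas you phrase it pointwise via a generic element $\bigl(\begin{smallmatrix}v\\Xv\end{smallmatrix}\bigr)$. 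The two are the same calculation in different notation.
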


\begin{proof}
Let $X$ be a right solvent. Then from the equality $X^2+BX+C=\mathbf0$ we have $-C-BX=X^2$. Therefore,
\begin{equation*}
\begin{pmatrix}
\mathbf0 & \mathbf1 \\ -C & -B
\end{pmatrix}
\begin{pmatrix} \mathbf1\\X\end{pmatrix}
=\begin{pmatrix} \mathbf1\\X\end{pmatrix}X,
\end{equation*}
which means that the columns of the matrix $\bigl(\begin{smallmatrix}\mathbf0 & \mathbf1 \\ -C & -B\end{smallmatrix}\bigr)
\bigl(\begin{smallmatrix}\mathbf1\\X\end{smallmatrix}\bigr)$ are linear combi\-nations of the columns of the matrix $\bigl(\begin{smallmatrix}\mathbf1\\X\end{smallmatrix}\bigr)$.

Conversely, let the columns of the matrix $\bigl(\begin{smallmatrix}\mathbf0 & \mathbf1 \\ -C & -B\end{smallmatrix}\bigr)
\bigl(\begin{smallmatrix}\mathbf1\\X\end{smallmatrix}\bigr)$ be linear combinations of the column of the matrix $\bigl(\begin{smallmatrix}\mathbf1\\X\end{smallmatrix}\bigr)$. This means that there exists a matrix $Q$ such that
\begin{equation*}
\begin{pmatrix}
\mathbf0 & \mathbf1 \\ -C & -B
\end{pmatrix}
\begin{pmatrix} \mathbf1\\X\end{pmatrix}
=\begin{pmatrix} \mathbf1\\X\end{pmatrix}Q.
\end{equation*}
It follows that $Q=X$ and $-C-BX=X^2$, which means that $X$ is a right solvent.
\end{proof}

\begin{proposition}\label{p:inv subspace}
Let $\mathcal A$ be a square matrix. A subspace $\mathcal M$ is invariant under $\mathcal A$ if and only if $\mathcal M$ is a linear span of some Jordan chains of the matrix~$\mathcal A$.
\end{proposition}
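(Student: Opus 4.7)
The plan is to argue both implications and lean on the Jordan normal form for the non-trivial direction.

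First I would handle the easy direction. Suppose $\mathcal M$ is the linear span of some collection of Jordan chains of $\mathcal A$. For any such chain $h_0, h_1, \dots, h_k$ corresponding to some $\lambda_0\in\mathbb C$, the defining relations~\eqref{e:J-chain of a matrix} give $\mathcal A h_i = \lambda_0 h_i + h_{i-1}$ with $h_{-1}=0$, so $\mathcal A h_i$ lies in the span of $h_0,\dots,h_k$, and in particular in $\mathcal M$. Since $\mathcal M$ is generated by finitely many such chains, $\mathcal A$ maps every generator into $\mathcal M$, whence $\mathcal A\mathcal M\subset\mathcal M$.

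For the converse direction, assume $\mathcal M$ is invariant under $\mathcal A$. Then the restriction $\mathcal A|_{\mathcal M}:\mathcal M\to\mathcal M$ is a well-defined linear operator on a finite-dimensional complex vector space. Applying the Jordan normal form theorem to $\mathcal A|_{\mathcal M}$, I obtain a basis of $\mathcal M$ that splits into a union of Jordan chains $h_0^{(j)}, h_1^{(j)},\dots, h_{k_j}^{(j)}$ of $\mathcal A|_{\mathcal M}$, each corresponding to some eigenvalue $\lambda_0^{(j)}$. The key observation is that the relations $\mathcal A|_{\mathcal M}\, h_i^{(j)} = \lambda_0^{(j)} h_i^{(j)} + h_{i-1}^{(j)}$ coincide with $\mathcal A h_i^{(j)} = \lambda_0^{(j)} h_i^{(j)} + h_{i-1}^{(j)}$ because $\mathcal A|_{\mathcal M}$ acts on $\mathcal M$ exactly as $\mathcal A$ does. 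Hence each of these is also a Jordan chain of $\mathcal A$ in the sense of~\eqref{e:J-chain of a matrix}, and $\mathcal M$ is by construction their linear span.

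The only subtle point is making sure that a Jordan chain of the restriction qualifies as a Jordan chain of $\mathcal A$, but this is immediate since the defining equations only involve the action of $\mathcal A$ on vectors already in $\mathcal M$. Beyond that, the argument is a straightforward reduction to the existence of Jordan normal form, so I expect no real obstacle.
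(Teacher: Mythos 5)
Your proof is correct and follows essentially the same route as the paper's: the forward direction by reading off the invariance from the Jordan chain relations, and the converse by taking a Jordan basis of the restriction $\mathcal A|_{\mathcal M}$ and observing that its Jordan chains are also Jordan chains of $\mathcal A$ itself. You simply spell out the steps that the paper leaves terse.
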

\begin{remark}\label{r:Jordan chains and inv subspacers}
Jordan chains in Proposition~\ref{p:inv subspace} must not be maximal, i.~e., $\mathcal M$ may be a linear span of $h_0$, \dots, $h_l$ from~\eqref{e:J-chain of a matrix} with $l<k$.
\end{remark}
\begin{proof}
If $\mathcal M$ is a linear span of some Jordan chains of the matrix $\mathcal A$, then, evidently, $\mathcal M$ is invariant under $\mathcal A$.
To prove the converse, we note that, by virtue of~\eqref{e:J-chain of a matrix}, Jordan chains of the restriction of $\mathcal A$ to $\mathcal M$ are Jordan chains of $\mathcal A$ itself.
\end{proof}

Theorem~\ref{t:4.6} allows us to propose an algorithm for finding a right solvent. Let we know a Jordan form for the companion matrix
\begin{equation*}
\mathcal C_1=\begin{pmatrix}
\mathbf0 & \mathbf1 \\
-C & -B
\end{pmatrix}.
\end{equation*}
So, for finding an invariant subspace of $\mathcal C_1$ of dimension $n$ one should compound different sets of generalized eigenvectors (if such a set contains a generalized eigenvector, it must contain all lower generalized vectors; but it must not contain the whole Jordan chain, see Remark~\ref{r:Jordan chains and inv subspacers}). We write the vectors forming this set as the columns of the block matrix (with blocks $X_1$ and $X_2$ of the size $n\times n$)
\begin{equation*}
\begin{pmatrix}
X_1 \\X_2
\end{pmatrix}.
\end{equation*}
Then we multiply this matrix on the right by $X_1^{-1}$ (provided that $X_1^{-1}$ exists) and arrive at the block matrix
\begin{equation*}
 \begin{pmatrix} \mathbf1\\X\end{pmatrix}=
 \begin{pmatrix}
   X_1X_1^{-1} \\
   X_2X_1^{-1}
 \end{pmatrix}.
\end{equation*}
We stress that the columns of $\bigl(\begin{smallmatrix}X_1 \\X_2\end{smallmatrix}\bigr)$ and $\bigl(\begin{smallmatrix}X_1X_1^{-1} \\X_2X_1^{-1}\end{smallmatrix}\bigr)$ span the same subspace.
By Theorem~\ref{t:4.6}, we obtain the right solvent
\begin{equation*}
X=X_2X_1^{-1}.
\end{equation*}

The following example shows that the matrix $X_1$ can be non-invertible. Clearly, in this case the linear span of the columns of the matrix $\bigl(\begin{smallmatrix}X_1 \\X_2\end{smallmatrix}\bigr)$ cannot be represented as a linear span of columns of a matrix of the form
$\bigl(\begin{smallmatrix}\mathbf1 \\X\end{smallmatrix}\bigr)$. Therefore, by Theorem~\ref{t:4.6}, the linear span of the columns of the matrix $\bigl(\begin{smallmatrix}X_1 \\X_2\end{smallmatrix}\bigr)$ does not generate a right solvent.

\begin{example}\label{ex:X1 is ininvert'}
Let $n=2$ and
\begin{equation*}
B=\begin{pmatrix}
1 & 0 \\
3 & 3
\end{pmatrix},\qquad
C=\begin{pmatrix}
1 & 0 \\
2 & 2
\end{pmatrix}.
\end{equation*}
Then the companion matrix $\mathcal C_1$ has the eigenvalues
\begin{equation*}
\lambda_1=-2,\qquad \lambda_2=-1,\qquad \lambda_{3,4}=\frac{-1\pm i\sqrt3}{2}.
\end{equation*}
We take for $\bigl(\begin{smallmatrix}X_1 \\X_2\end{smallmatrix}\bigr)$ the eigenvectors that correspond to $\lambda_1$ and $\lambda_2$:
\begin{equation*}
\begin{pmatrix}X_1 \\X_2\end{pmatrix}=
\begin{pmatrix}
0 & 0 \\
1 & 1 \\
0 & 0 \\
-2 & -1
\end{pmatrix}.
\end{equation*}
Clearly, the block $X_1$ is not invertible.
\end{example}

There are two possible problems with the solvent $X=X_2X_1^{-1}$. (i) The condition number
$\varkappa(X)=\lVert X_1\rVert\cdot\lVert X_1^{-1}\rVert$ can be large (i.~e., the situation is close to the non-invertibility of $X_1$). This may cause large errors in $X=X_2X_1^{-1}$.
(ii) Our ultimate goal in finding solvents is to use the formulas for $U(t)$ and $U'(t)$ from Corollary~\ref{c:Perov}. They contain $e^{Xt}$. The condition number of the operation $X\mapsto e^{Xt}$ is~\cite[\S~11.3.2]{Golub-Van_Loan96:eng}
\begin{equation*}
\nu(X,t)=\max_{\lVert H\rVert\le1}\biggl\lVert \int_0^te^{X(t-s)}He^{Xs}\,ds\biggr\rVert
\frac{\lVert X\rVert}{\lVert e^{Xt}\rVert}.
\end{equation*}
It is known~\cite[p. 979]{Van_Loan77}, \cite[p.~575]{Golub-Van_Loan96:eng} that $\nu(X,t)\ge t\lVert X\rVert$. Thus, if $\lVert X\rVert$ is large, the condition number $\nu(X,t)$ is also large, which can cause large errors in the calculation of $e^{Xt}$.

\section{Calculating complete pairs}\label{s:comp pair}

We need not a single right solvent, but a complete pair of right solvents. So, we proceed to the construction of a complete pair.

\begin{theorem}\label{t:2 roots}
Matrices $X,Z\in\mathbb C^{n\times n}$ form a complete pair of right solvents of pencil~\eqref{e:pencil 2nd order} if and only if the subspaces
\begin{equation*}
\mathcal M_1=\Lin\begin{pmatrix}\mathbf1\\X\end{pmatrix},\qquad
\mathcal M_2=\Lin\begin{pmatrix}\mathbf1\\Z\end{pmatrix}
\end{equation*}
are invariant under the companion matrix $\mathcal C_1$ and yield a decomposition of the space $\mathbb C^{2n}$ into the direct sum $\mathbb C^{2n}=\mathcal M_1\oplus\mathcal M_2$.
\end{theorem}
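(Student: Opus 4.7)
The plan is to split the equivalence into the two claims of Theorem~\ref{t:4.6} plus a direct-sum condition, and then reduce the direct-sum condition to the invertibility of $X-Z$.

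First I would observe that by Theorem~\ref{t:4.6}, the condition that $X$ and $Z$ are right solvents of pencil~\eqref{e:pencil 2nd order} is literally equivalent to the invariance of $\mathcal M_1$ and $\mathcal M_2$ under $\mathcal C_1$. So the statement reduces to showing: once $X$ and $Z$ are right solvents, the pair is complete (i.e., $X-Z$ is invertible) if and only if $\mathbb C^{2n}=\mathcal M_1\oplus\mathcal M_2$.

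Next I would note that each of $\mathcal M_1$ and $\mathcal M_2$ has dimension $n$, because the block matrices $\bigl(\begin{smallmatrix}\mathbf1\\X\end{smallmatrix}\bigr)$ and $\bigl(\begin{smallmatrix}\mathbf1\\Z\end{smallmatrix}\bigr)$ have $n$ linearly independent columns (the top block is the identity). Since $\dim\mathcal M_1+\dim\mathcal M_2=2n=\dim\mathbb C^{2n}$, the direct sum condition $\mathbb C^{2n}=\mathcal M_1\oplus\mathcal M_2$ is equivalent to $\mathcal M_1\cap\mathcal M_2=\{0\}$.

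The core step is then to compute this intersection explicitly. A vector lies in $\mathcal M_1\cap\mathcal M_2$ precisely when there exist $u,v\in\mathbb C^n$ with
\begin{equation*}
\begin{pmatrix}u\\Xu\end{pmatrix}=\begin{pmatrix}v\\Zv\end{pmatrix},
\end{equation*}
which forces $u=v$ and $(X-Z)u=0$. Hence $\mathcal M_1\cap\mathcal M_2=\{0\}$ if and only if $\Ker(X-Z)=\{0\}$, i.e., if and only if $X-Z$ is invertible. This closes both implications simultaneously. I do not expect any real obstacle: the proof is essentially a dimension count combined with a one-line identification of the intersection, and no deeper structural fact beyond Theorem~\ref{t:4.6} is required.
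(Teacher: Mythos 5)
Your proof is correct and follows essentially the same route as the paper: reduce to Theorem~\ref{t:4.6} plus the observation that, since both subspaces have dimension $n$, the direct-sum condition is equivalent to $\mathcal M_1\cap\mathcal M_2=\{0\}$, and then identify this intersection with $\Ker(X-Z)$. Your version is actually a bit tighter than the paper's: the paper splits the forward direction into two cases and, in the nontrivial case, passes through an eigenvector of $\mathcal C_1$ lying in the intersection before extracting preimages $h_1$, $h_2$, whereas your direct computation of $\mathcal M_1\cap\mathcal M_2$ handles both implications at once and shows the eigenvector detour and case split are unnecessary.
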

\begin{proof}
Let matrices $X$ and $Z$ form a complete pair of right solvents. Then it follows from Theorem~\ref{t:4.6} that the subspaces $\mathcal M_1$ and $\mathcal M_2$ are invariant under $\mathcal C_1$. We consider two cases.

The first case: let $\mathcal M_1\cap\mathcal M_2=\{0\}$. Then, by the dimension reasoning, we have the direct sum decomposition $\mathbb C^{2n}=\mathcal M_1\oplus\mathcal M_2$.

The second case: let $\mathcal M_1\cap\mathcal M_2\neq\{0\}$. Then the subspace $\mathcal M_1\cap\mathcal M_2\neq\{0\}$ is invariant under $\mathcal C_1$ and consequently contains a non-zero vector $h\in\mathbb C^{2n}$, which is an eigenvector for the matrix $\mathcal C_1$. We take $h_1,h_2\in\mathbb C^n$ such that
$\bigl(\begin{smallmatrix}\mathbf 1\\X\end{smallmatrix}\bigr)h_1=h$ and
$\bigl(\begin{smallmatrix}\mathbf 1\\Z\end{smallmatrix}\bigr)h_2=h$. Then
$\bigl(\begin{smallmatrix}\mathbf1&\mathbf1\\X&Z\end{smallmatrix}\bigr)
\bigl(\begin{smallmatrix}h_1\\-h_2\end{smallmatrix}\bigr)=0$. On the other hand,
$\bigl(\begin{smallmatrix}\mathbf1&\mathbf1\\X&Z\end{smallmatrix}\bigr)
\bigl(\begin{smallmatrix}h_1\\-h_2\end{smallmatrix}\bigr)=
\bigl(\begin{smallmatrix}h_1-h_2\\Xh_1-Zh_2\end{smallmatrix}\bigr)$. It follows that $h_1=h_2$ and $(X-Z)h_1=0$. This contradicts the invertibility of $X-Z$.

Conversely, let $\mathcal M_1$ and $\mathcal M_2$ be invariant under $\mathcal C_1$ and generate a decomposition of $\mathbb C^{2n}$ into a direct sum. We show that the matrix $X-Z$ is invertible. We suppose the contrary: let  $(X-Z)h=0$ for a non-zero vector $h\in\mathbb C^n$. Then
\begin{equation*}
\begin{pmatrix}
\mathbf1&\mathbf1\\
X&Z
\end{pmatrix}
\begin{pmatrix}
h \\
-h
\end{pmatrix}=0,
\end{equation*}
which implies that the rang of the matrix $\bigl(\begin{smallmatrix}\mathbf1&\mathbf1\\
X&Z\end{smallmatrix}\bigr)$ is less than $2n$, which contradicts the assumption $\mathcal M_1+\mathcal M_2=\mathbb C^{2n}$.
\end{proof}

\begin{proposition}\label{p:choice of M}
Let $\mathcal C\in\mathbb C^{2n\times 2n}$ be an arbitrary matrix, and let $\mathcal M_1,\mathcal M_2\subseteq\mathbb C^{2n}$ be two subspaces.

If the subspaces $\mathcal M_1$ and $\mathcal M_2$ are invariant under the matrix $\mathcal C$ and yield the decomposition of $\mathbb C^{2n}$ into the direct sum $\mathbb C^{2n}=\mathcal M_1\oplus\mathcal M_2$, then there exists a Jordan representation of the matrix $\mathcal C$ such that each its Jordan chain is contained either in $\mathcal M_1$ or in $\mathcal M_2$.

Conversely. If there exists a Jordan representation of the matrix $\mathcal C$ such that each Jordan chain {\rm(}corresponding to this Jordan representation{\rm)} is contained either in $\mathcal M_1$ or in $\mathcal M_2$, then the subspaces $\mathcal M_1$ and $\mathcal M_2$ are invariant under $\mathcal A$ and yield the decomposition of the space $\mathbb C^{2n}$ into the direct sum $\mathbb C^{2n}=\mathcal M_1\oplus\mathcal M_2$.
\end{proposition}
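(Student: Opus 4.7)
The plan is to reduce everything to Proposition~\ref{p:inv subspace}, which identifies $\mathcal C$-invariant subspaces with spans of collections of Jordan chains of $\mathcal C$.

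For the forward implication, I assume $\mathcal M_1$ and $\mathcal M_2$ are $\mathcal C$-invariant with $\mathbb C^{2n}=\mathcal M_1\oplus\mathcal M_2$. I restrict $\mathcal C$ to each $\mathcal M_i$ and pick a Jordan basis of $\mathcal M_i$ with respect to the restriction $\mathcal C|_{\mathcal M_i}$; this gives a family of Jordan chains of $\mathcal C|_{\mathcal M_i}$ whose union is a basis of $\mathcal M_i$. Because the defining relations~\eqref{e:J-chain of a matrix} only involve the action of the operator on the chain vectors, each Jordan chain of $\mathcal C|_{\mathcal M_i}$ is automatically a Jordan chain of $\mathcal C$ itself (this is the same observation used in the proof of Proposition~\ref{p:inv subspace}). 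Concatenating the two Jordan bases via the direct sum $\mathbb C^{2n}=\mathcal M_1\oplus\mathcal M_2$ yields a basis of $\mathbb C^{2n}$ consisting of Jordan chains of $\mathcal C$, each of which lies entirely in one of $\mathcal M_1$ or $\mathcal M_2$; this is the required Jordan representation.

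For the converse, I start from a Jordan representation of $\mathcal C$ whose chains are each contained in $\mathcal M_1$ or in $\mathcal M_2$. Let $\mathcal N_i$ denote the linear span of those chains contained in $\mathcal M_i$; by Proposition~\ref{p:inv subspace} each $\mathcal N_i$ is $\mathcal C$-invariant, and by construction $\mathcal N_i\subseteq\mathcal M_i$. Since the vectors of a Jordan representation form a basis of $\mathbb C^{2n}$ and have been partitioned between the two subspaces, $\mathcal N_1+\mathcal N_2=\mathbb C^{2n}$ and $\mathcal N_1\cap\mathcal N_2=\{0\}$, so $\mathcal N_1\oplus\mathcal N_2=\mathbb C^{2n}$. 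Together with $\mathcal N_i\subseteq\mathcal M_i$, a dimension count forces $\mathcal N_i=\mathcal M_i$, so the $\mathcal M_i$ are invariant and decompose $\mathbb C^{2n}$ as a direct sum.

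The only point worth double-checking is the claim that a Jordan chain of the restricted operator $\mathcal C|_{\mathcal M_i}$ is still a Jordan chain of $\mathcal C$; this is immediate because the two operators coincide on $\mathcal M_i$ and the defining equations in~\eqref{e:J-chain of a matrix} involve only that action. Beyond this, the argument is essentially bookkeeping, and I do not expect any substantive obstacle.
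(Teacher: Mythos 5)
Your forward direction is correct and matches the paper's proof: form Jordan bases for the restrictions $\mathcal C|_{\mathcal M_i}$, note that because the restricted and full operators coincide on $\mathcal M_i$ these chains are Jordan chains of $\mathcal C$ itself, and concatenate across the direct sum.

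The converse has a gap at the step ``a dimension count forces $\mathcal N_i=\mathcal M_i$.'' From $\mathcal N_i\subseteq\mathcal M_i$ and $\mathcal N_1\oplus\mathcal N_2=\mathbb C^{2n}$ alone you cannot conclude $\mathcal N_i=\mathcal M_i$: take $\mathcal C=\mathbf0$, the standard basis $e_1,\dots,e_{2n}$ as a Jordan representation of length-one chains, $\mathcal M_1=\mathbb C^{2n}$ and $\mathcal M_2=\Lin\{e_{n+1},\dots,e_{2n}\}$. Every chain lies in $\mathcal M_1$ (some also in $\mathcal M_2$), both are invariant, and for the obvious partition $\mathcal N_1\oplus\mathcal N_2=\mathbb C^{2n}$, yet $\mathcal M_1\ne\mathcal N_1$ and $\mathcal M_1\cap\mathcal M_2\ne\{0\}$. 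The dimension count would only close if you already knew $\dim\mathcal M_1+\dim\mathcal M_2\le 2n$, which is precisely part of what must be proved. In fairness, the proposition as worded admits this reading, and the paper's own one-line deduction from Proposition~\ref{p:inv subspace} tacitly takes each $\mathcal M_i$ to be \emph{equal} to the span of the chains assigned to it, not merely to contain them (this is how the $\mathcal M_i$ are built in the algorithm of Section~\ref{s:comp pair}). Under that reading $\mathcal N_i=\mathcal M_i$ holds by definition and both your argument and the paper's go through; you should state that identification explicitly rather than invoke a dimension count that the hypotheses do not support.
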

\begin{proof}
To prove the first statement, we construct the Jordan rep\-re\-sen\-tations for the restrictions of (the operator, generated by) $\mathcal C$ to the subspaces $\mathcal M_1$ and $\mathcal M_2$, and then take their direct sum. The second statement follows from Proposition~\ref{p:inv subspace}.
\end{proof}

Examples~\ref{ex:pencil with Jord} and~\ref{ex:counterexample3} below describe two situations when the construction of a complete pair of right solvents is impossible.

\begin{example}\label{ex:pencil with Jord}
We consider the pencil
\begin{equation*}
L(\lambda)=\lambda^2\mathbf1-J,
\end{equation*}
where $J$ is a Jordan block with zero eigenvalue. For example,
\begin{equation*}
L(\lambda)=\lambda^2\begin{pmatrix}
1 & 0 \\
0 & 1
\end{pmatrix}-
\begin{pmatrix}
0 & 1 \\
0 & 0
\end{pmatrix}.
\end{equation*}
It is known that this pencil has no right solvents. In fact, if we assume that $X$ is a right solvent, then by Theorem~\ref{t:dual root} we would have the factorization
\begin{equation*}
\lambda^2\mathbf1+J=(\lambda\mathbf1+X)(\lambda\mathbf1-X),
\end{equation*}
which shows that $X$ satisfies the equation
\begin{equation*}
X^2=J.
\end{equation*}
We show that this equation has no solutions. Actually, from the spectral mapping theorem~\cite[Theorem 10.28]{Rudin-FA73:eng} it follows that $\sigma(X)=\{0\}$; therefore $X$ is itself similar to a direct sum of two Jordan blocks of the size $1\times1$ with zero eigenvalues or one Jordan block of the size $2\times2$; in the both cases the matrix $X^2$ must have two linearly independent eigenvectors; but the Jordan block $J$ has only one linearly independent eigenvector.
\end{example}

\begin{example}\label{ex:counterexample3}
We consider the scalar differential equation
\begin{equation*}
x''(t)+2x'(t)+x(t)=f(t).
\end{equation*}
In this example, the matrices $B$ and $C$ are of the size $1\times1$, and the companion matrix is
\begin{equation*}
\mathcal C_1=\begin{pmatrix}
0 & 1 \\
-1 & -2
\end{pmatrix}.
\end{equation*}
It is easy to verify that the matrix $\mathcal C_1$ has the only eigenvalue $\lambda_1=-1$, and $\lambda_1$ corresponds to a Jordan block of the size $2\times2$. Therefore the space $\mathbb C^2$ can not be decomposed into a direct sum of two invariant subspaces $\mathcal M_1$ and $\mathcal M_2$ of dimension 1. By Theorem~\ref{t:2 roots}, this implies that it is impossible to construct a complete pair of right solvents. Nevertheless, one right solvent exists, it is $X=1$; therefore, by Theorem~\ref{t:dual root}, the pencil possesses a factorization. We stress, that the coefficients $B$ and $C$ are selfadjoint matrices (of the size $1\times1$). Thus, even a selfadjoint pencil can not have a complete pair of right solvents.
\end{example}

Theorems~\ref{t:4.6} and~\ref{t:2 roots} together with Propositions~\ref{p:inv subspace} and~\ref{p:choice of M} allows us to propose an algorithm of finding all complete pairs of right solvents. First we take a Jordan decomposition of the companion matrix $\mathcal C_1$, split the set of all Jordan chains into two parts so that the number of vectors in each part is equal to $n$, and then define $\mathcal M_1$ and $\mathcal M_2$ as spans of these parts. Then with the aid of Theorem~\ref{t:4.6} we construct right solvents $X$ and $Z$ corresponding to these subspaces. According to Theorem~\ref{t:2 roots} this pair of right solvents is complete.
By going through all possible splitting the set of all Jordan chains into two such parts, we obtain all complete pairs of right solvents $X$ and $Z$, corresponding to the considered Jordan representation of the matrix. We recommend assigning Jordan chains corresponding to the same eigenvalue (as well as to very close eigenvalues) to the same part, cf. Theorem~\ref{t:2.15}. Of course, not all complete pairs of right solvents $X$ and $Z$ will be written down but only those that correspond to the Jordan decomposition under consideration (for more details, see the discussion below and, in particular, Remark~\ref{r:double jordan block}).

\begin{remark}\label{r:double jordan block}
The set of Jordan chains (and their linear spans) is not always determined uniquely, although the Jordan form itself is uniquely determined (up to the order of Jordan blocks). For example, suppose that an eigenvalue $\lambda_0$ corresponds to two Jordan blocks of the same size $r\times r$. Then the Jordan chains (corresponding to $\lambda_0$) can be chosen in infinitely many different ways, since they are determined~\cite[\S~6.1]{Shilov-LA77:eng} by choosing a basis in the factor space $\Ker(\lambda_0\mathbf1-\mathcal C_1)^r/\Ker(\lambda_0\mathbf1-\mathcal C_1)^{r-1}$. From the point of view of the algorithm now discussed, this is not significant, since we recommend including into one part all Jordan chains corresponding to the same eigenvalue.
\end{remark}

Due to rounding errors, multiple eigenvalues are not encountered in practical calculations  (in other words, all roots of the characteristic polynomial are simple). As long as all Jordan blocks have the size $1\times1$, it is easy to see that there are $\binom{2n}{n}$ of splittings in total. Of course, listing all $\binom{2n}{n}$ splittings is only possible for small values of $n$; for example, $\binom{20}{10}=184756$,
$\binom{24}{12}=2704156$ (actually, these numbers should be divided by 2, because the pairs $X$, $Z$ and $Z$, $X$ are equivalent from our problem point of view).

We look over many complete pairs of right solvents, because some complete pairs may be inconvenient. The main application of a complete pair of right solvents is described in Corollary~\ref{c:Perov} or (if we need a function of a pencil other than the exponential one) Theorem~\ref{t:2.16}. These statements describe formulas, the implementation of which involves matrix operations. For example, they use the multiplication by the matrix $(X-Z)^{-1}$, which, by the definition of a complete pair, exists, but can be ill-conditioned, i.e. the number
\begin{equation}\label{e:kappa}
\varkappa(X-Z)=\lVert X-Z\rVert\cdot\lVert (X-Z)^{-1}\rVert
\end{equation}
can be large. We call the number $\varkappa$ the \emph{condition number of the pair}. More generally, the number
\begin{equation*}
\varkappa(A)=\lVert A\rVert\cdot\lVert A^{-1}\rVert
\end{equation*}
is called~\cite[p.~328]{Higham08} the \emph{condition number} of a matrix $A$.
The multiplication by a matrix with a large condition number leads to large rounding errors, which makes the formulas from Corollary~\ref{c:Perov} and Theorem~\ref{t:2.16} inaccurate.

So, we propose to find all complete pairs of right solvents (perhaps weeding out those of little use in advance, see Examples~\ref{ex:2} and~\ref{ex:3}), for each of them estimate possible rounding errors (see Section~\ref{s:num exper} for details) and then select the best pair.

Let us look a little more closely at what happens to multiple eigenvalues during calculations. Suppose that an eigenvalue $\lambda_0$ of the matrix $\mathcal C_1$ corresponds to an eigensubspace of dimension $m>1$; most likely, the action of $\mathcal C_1$ on this eigensubspace is described by a Jordan block (one or more). Due to the disturbance caused by rounding errors, the calculation results in $m$ different eigenvalues that are close to each other. In this case, eigenvectors corresponding to these eigenvalues can be also close to each other. If we assign them to different parts, then the condition number $\varkappa(X-Z)$ will be huge, making the corresponding complete pair $X$ and $Z$ unacceptable. It is for this reason that we recommend placing very close eigenvalues (for example, differing by $10^{-8}$ or less) into the same part in advance.

\section{Numerical experiments}\label{s:num exper}
In this section, we present three numerical examples.
We use `Wolfram Mathema\-tica'~\cite{Wolfram} in our numerical experiments.

\begin{figure}[thb]
\begin{center}
\includegraphics[width=0.45\textwidth]{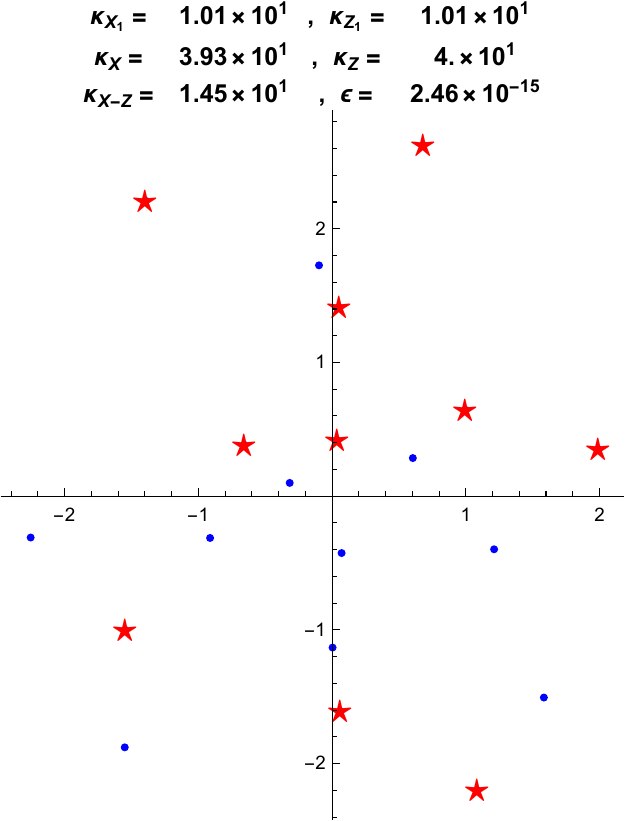}\hfill
\includegraphics[width=0.45\textwidth]{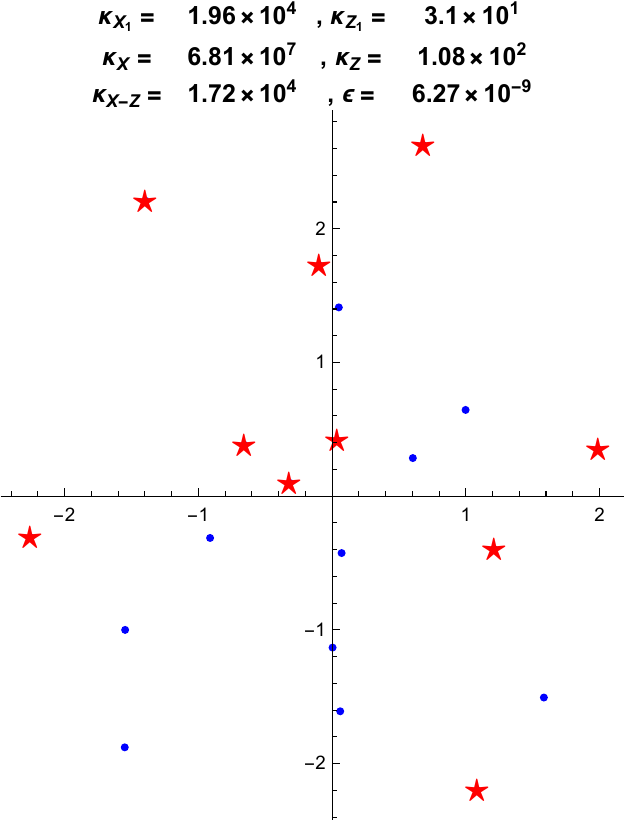}
 \caption{The best (left) and the worst (right) splittings of the spectrum of $\mathcal C_1$ from Example~\ref{ex:1}(a). The points of the spectrum of $\mathcal C_1$ related to $X$ are depicted by points, and related to $Z$ are depicted by stars} \label{f:comp1-1}
\end{center}
%
\begin{center}
\includegraphics[width=0.45\textwidth]{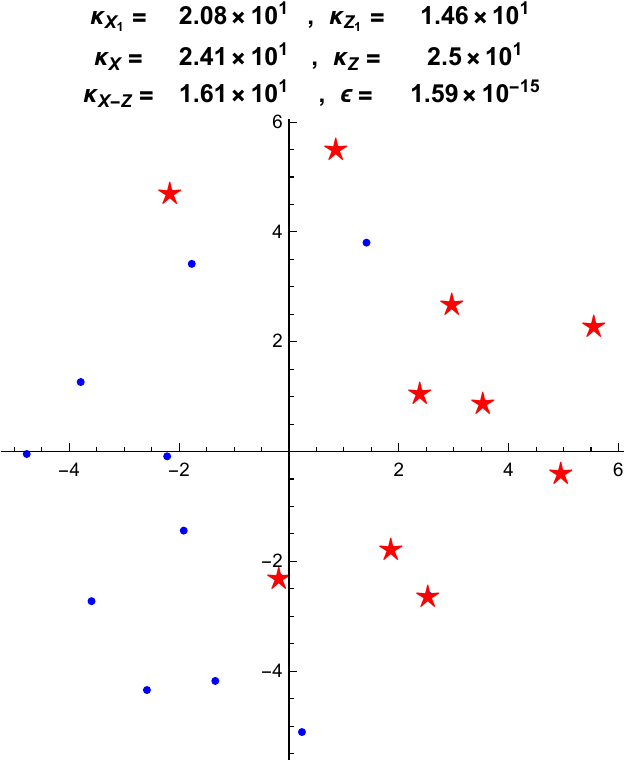}\hfill
\includegraphics[width=0.45\textwidth]{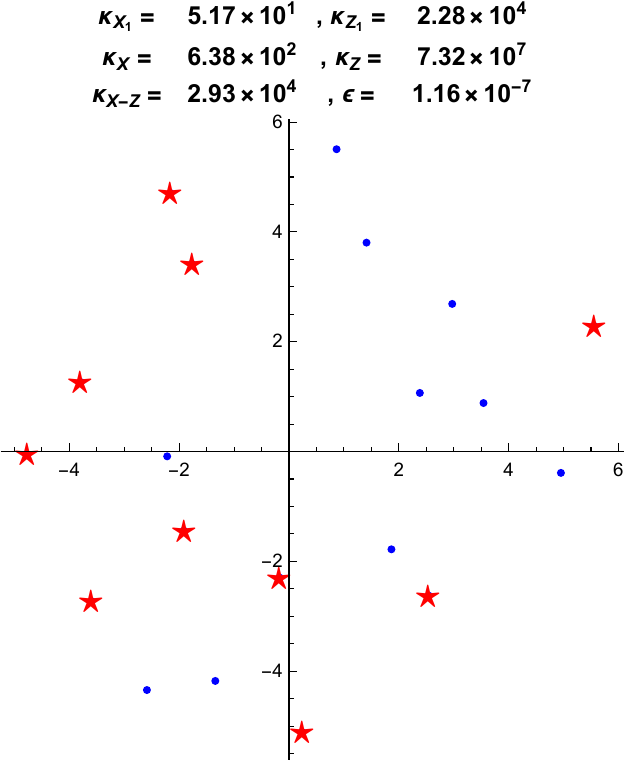}
 \caption{The best (left) and the worst (right) splittings of the spectrum of $\mathcal C_1$ from Example~\ref{ex:1}(b). The points of the spectrum of $\mathcal C_1$ related to $X$ are depicted by points, and related to $Z$ are depicted by stars} \label{f:comp1-10}
\end{center}
\end{figure}

\begin{figure}[thb]
\begin{center}
\includegraphics[width=0.45\textwidth]{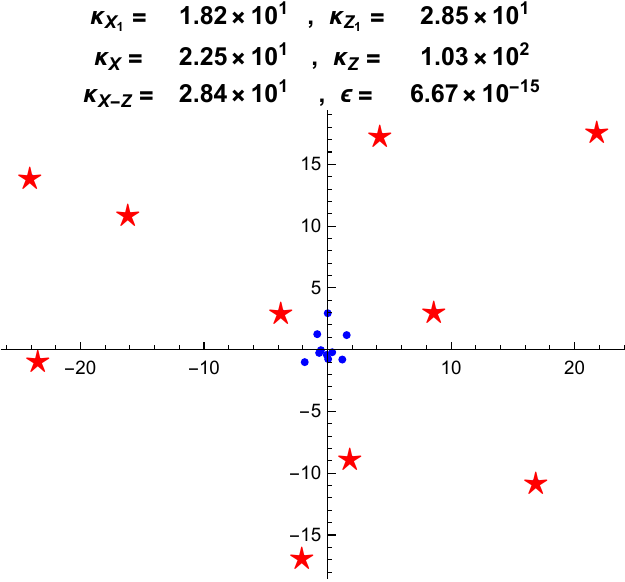}\hfill
\includegraphics[width=0.45\textwidth]{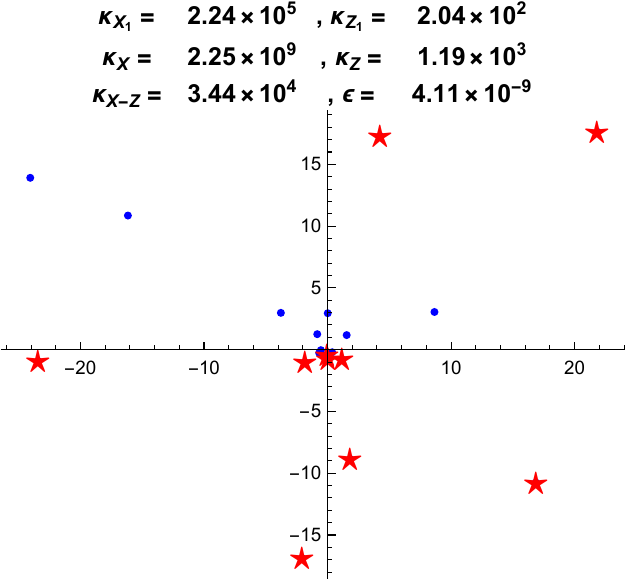}
 \caption{The best (left) and the worst (right) splittings of the spectrum of $\mathcal C_1$ from Example~\ref{ex:1}(c). The points of the spectrum of $\mathcal C_1$ related to $X$ are depicted by points, and related to $Z$ are depicted by stars} \label{f:comp10-10}
\end{center}
\end{figure}

\begin{example}\label{ex:1}
(a) We set $n=10$ and create matrices $B$ and $C$ consisting of random complex numbers uniformly distributed in $[-1,1]+[-i,i]$. We form the companion matrix $\mathcal C_1$ and find~\cite{Wolfram} its eigenvalues and eigenvectors. Since the matrices $B$ and $C$ are random, all eigenvalues are distinct and we obtain $2n=20$ different eigenvalues. We construct all splittings of the set $V$ of all normalized eigenvectors into two parts $V_X$ and $V_Z$, consisting of $n$ vectors. For each splitting, we create two block matrices
\begin{equation*}
\begin{pmatrix}
X_1 \\
X_2
\end{pmatrix},\qquad
\begin{pmatrix}
Z_1 \\
Z_2
\end{pmatrix},
\end{equation*}
whose columns are $V_X$ and $V_Z$ respectively.
Then we pass to the matrices (if $X_1$ or $Z_1$ is not invertible or very badly invertible we exclude this splitting from the consideration)
\begin{equation*}
 \begin{pmatrix} \mathbf1\\X\end{pmatrix}=
 \begin{pmatrix}
   X_1X_1^{-1} \\
   X_2X_1^{-1}
 \end{pmatrix},\qquad
 \begin{pmatrix} \mathbf1\\Z\end{pmatrix}=
 \begin{pmatrix}
  Z_1Z_1^{-1} \\
  Z_2Z_1^{-1}
 \end{pmatrix}.
\end{equation*}
Thus, we associate a complete pair of right solvents $X$ and $Z$ with each splitting (Theorem~\ref{t:2 roots}). For each pair, we calculate the condition numbers
\begin{equation*}
\varkappa(X_1),\;\varkappa(Z_1),\;\varkappa(X),\;\varkappa(Z),\;\varkappa(X-Z)
\end{equation*}
and the maximum $\varkappa_{\max}$ of these numbers. We select a complete pair of right solvents that corresponds to the smallest $\varkappa_{\max}$ and a complete pair of right solvents that corresponds to the largest $\varkappa_{\max}$; we call these pairs \emph{the best} and \emph{the worst} respectively.

We take $X_1$ and $X_2$ that correspond to the best pair. We create copies of $X_1$ and $X_2$ with 100 significant digits, adding zero digits at the end. We calculate $X$ and $Z$ again, and calculate the matrix (see Corollary~\ref{c:Perov})
\begin{equation*}
U(1)=(e^X-e^Z)(X-Z)^{-1}
\end{equation*}
with ordinary precision (about 16 significant digits) and with 100 significant digits; we denote the results by $U_{16}(1)$ and $U_{100}(1)$ respectively. We interpret $U_{100}(1)$ as the precise value. We calculate the \emph{relative error} of $U(1)$
\begin{equation*}
\varepsilon_{\text{best}}=\frac{\lVert U_{16}(1)-U_{100}(1)\rVert}{\lVert U_{16}(1)\rVert}
\end{equation*}
(for matrices, we use the norm induced by the Euclidian norm on $\mathbb C^n$).

We repeat the same calculations for the worst pair and denote the obtained number by $\varepsilon_{\text{worst}}$.
The result of calculations is presented in Figure~\ref{f:comp1-1}.
We have repeated this experiment several times; the results are similar.

We also perform two modifications of this experiment:
(b) with a matrix $B$ consisting of random complex numbers uniformly distributed in $[-1,1]+[-i,i]$ and a matrix $C$ consisting of random complex numbers uniformly distributed in $[-10,10]+[-10i,10i]$, see Figure~\ref{f:comp1-10};
(c) with matrices $B$ and $C$ consisting of random complex numbers uniformly distributed in $[-10,10]+[-10i,10i]$, see Figure~\ref{f:comp10-10}.
\end{example}

\begin{figure}[hbt]
\begin{center}
\includegraphics[width=0.45\textwidth]{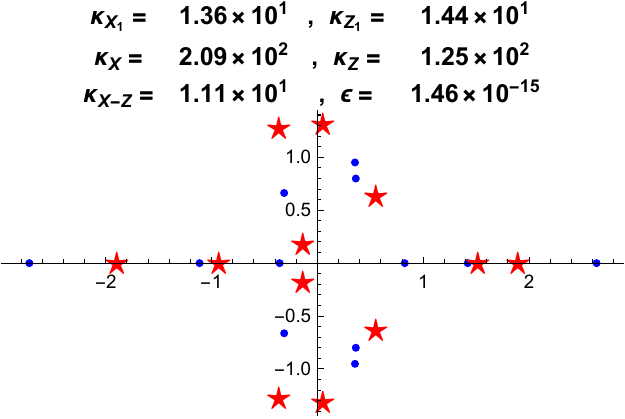}\hfill
\includegraphics[width=0.45\textwidth]{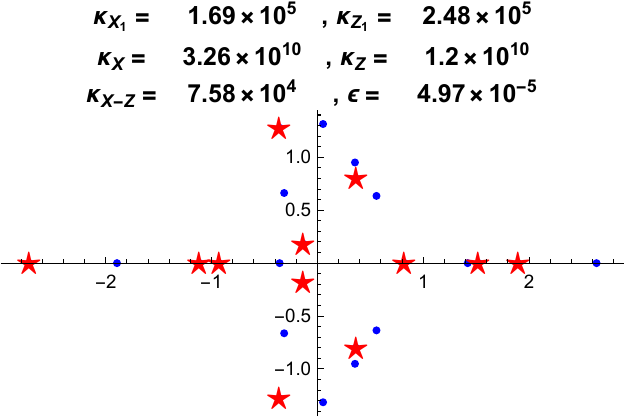}
 \caption{The best (left) and the worst (right) splittings of the spectrum of $\mathcal C_1$ from Example~\ref{ex:2}(a). The points of the spectrum of $\mathcal C_1$ related to $X$ are depicted by points, and related to $Z$ are depicted by stars} \label{f:real1-1}
\end{center}
\begin{center}
\includegraphics[width=0.45\textwidth]{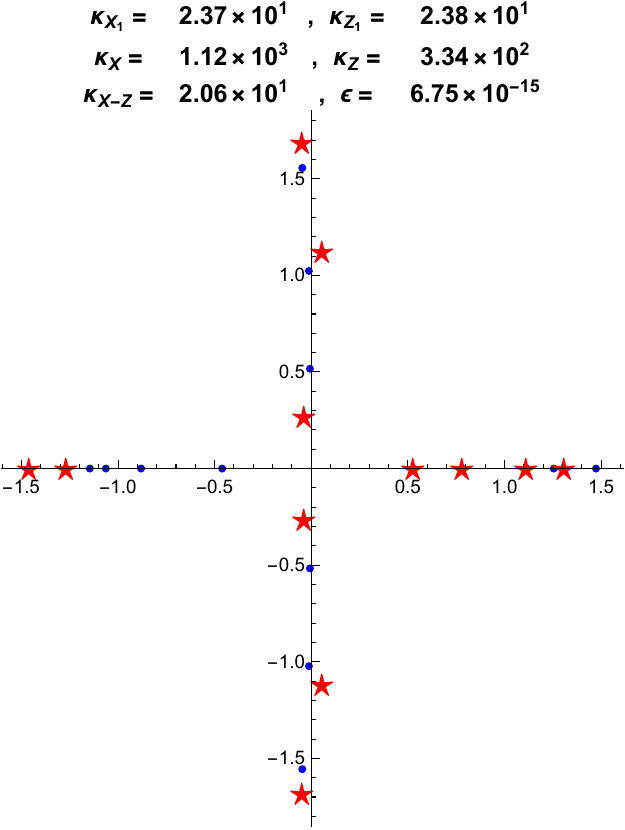}\hfill
\includegraphics[width=0.45\textwidth]{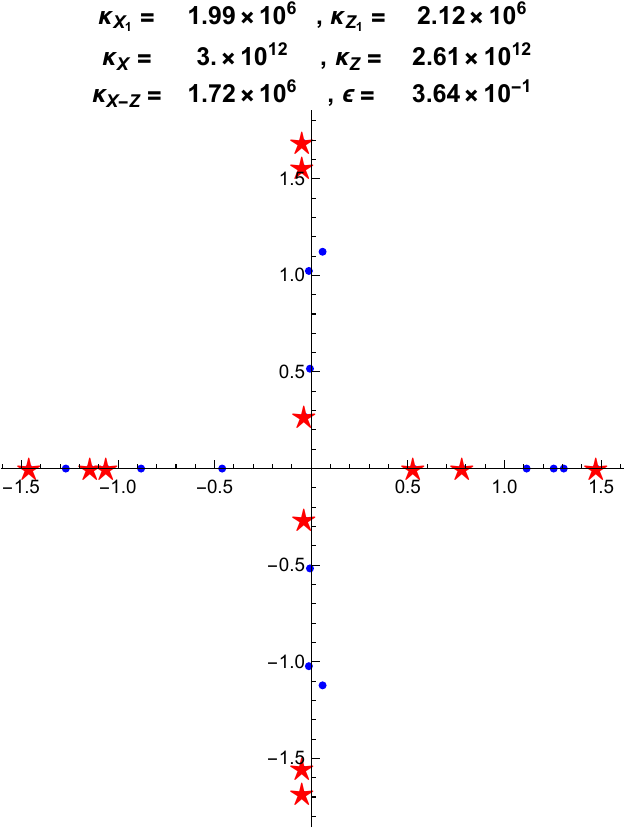}
 \caption{The best (left) and the worst (right) splittings of the spectrum of $\mathcal C_1$ from Example~\ref{ex:2}(b). The points of the spectrum of $\mathcal C_1$ related to $X$ are depicted by points, and related to $Z$ are depicted by stars} \label{f:real01-1}
\end{center}
%
\begin{center}
\includegraphics[width=0.45\textwidth]{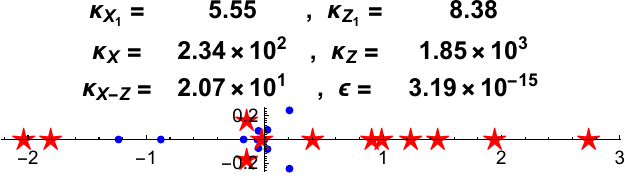}\hfill
\includegraphics[width=0.45\textwidth]{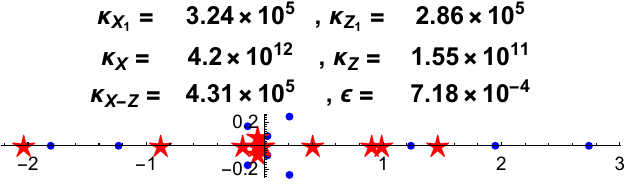}
 \caption{The best (left) and the worst (right) splittings of the spectrum of $\mathcal C_1$ from Example~\ref{ex:2}(c). The points of the spectrum of $\mathcal C_1$ related to $X$ are depicted by points, and related to $Z$ are depicted by stars } \label{f:real1-01}
\end{center}
\end{figure}

\begin{example}\label{ex:2}
We consider a real selfadjoint pencil (i.~e., the coefficients $B$ and $C$ are Hermitian matrices)~\cite{Keldysh51:eng,Keldysh71:eng,Krein-LangerI:eng,Krein-LangerII:eng,Langer67,Langer76,Marcus88:eng,
Shkalikov-MS88:eng,Shkalikov96:eng,Shkalikov98,Shkalikov-Pliev89:eng}.

(a) We set $n=12$ and create Hermitian matrices $B$ and $C$ (i.~e., $B^H=B$ and $C^H=C$) consisting of random real numbers uniformly distributed in $[-1,1]$. We form the companion matrix $\mathcal C_1$ and find its eigenvalues and eigenvectors. All eigenvalues are distinct and we obtain $2n=24$ different eigenvalues. We recall that non-real eigenvalues occur in complex conjugate pairs. We construct all splittings of the set $V$ of all normalized eigenvectors into two parts $V_X$ and $V_Z$, consisting of $n$ vectors, such that if a complex eigenvalue $\lambda$ is related to $V_X$, then the conjugate number $\bar\lambda$ is also related to $V_X$. Taking into account only such splittings ensures that the solvents $X$ and $Z$ are real.
After that we repeat the calculations from Example~\ref{ex:1}(a). The result is presented in Figure~\ref{f:real1-1}.

We perform two modifications of this experiment:
(b) with a matrix $B$ consisting of random real numbers uniformly distributed in $[-0.1,0.1]$ and a matrix $C$ consisting of random real numbers uniformly distributed in $[-1,1]$, see Figure~\ref{f:real01-1};
(c) with  a matrix $B$ consisting of random real numbers uniformly distributed in $[-1,1]$ and a matrix $C$ consisting of random real numbers uniformly distributed in $[-0.1,0.1]$, see Figure~\ref{f:real1-01}.
\end{example}

\begin{figure}[thb]
\begin{center}
\includegraphics[width=0.45\textwidth]{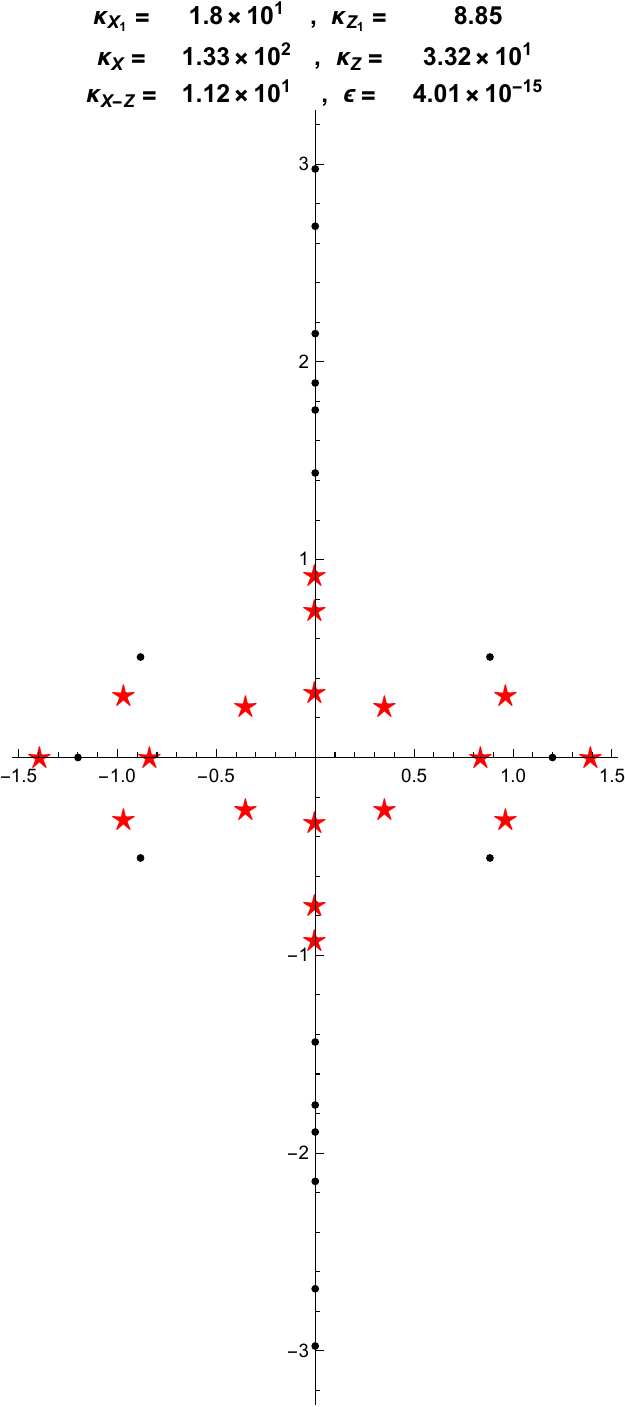}\hfill
\includegraphics[width=0.45\textwidth]{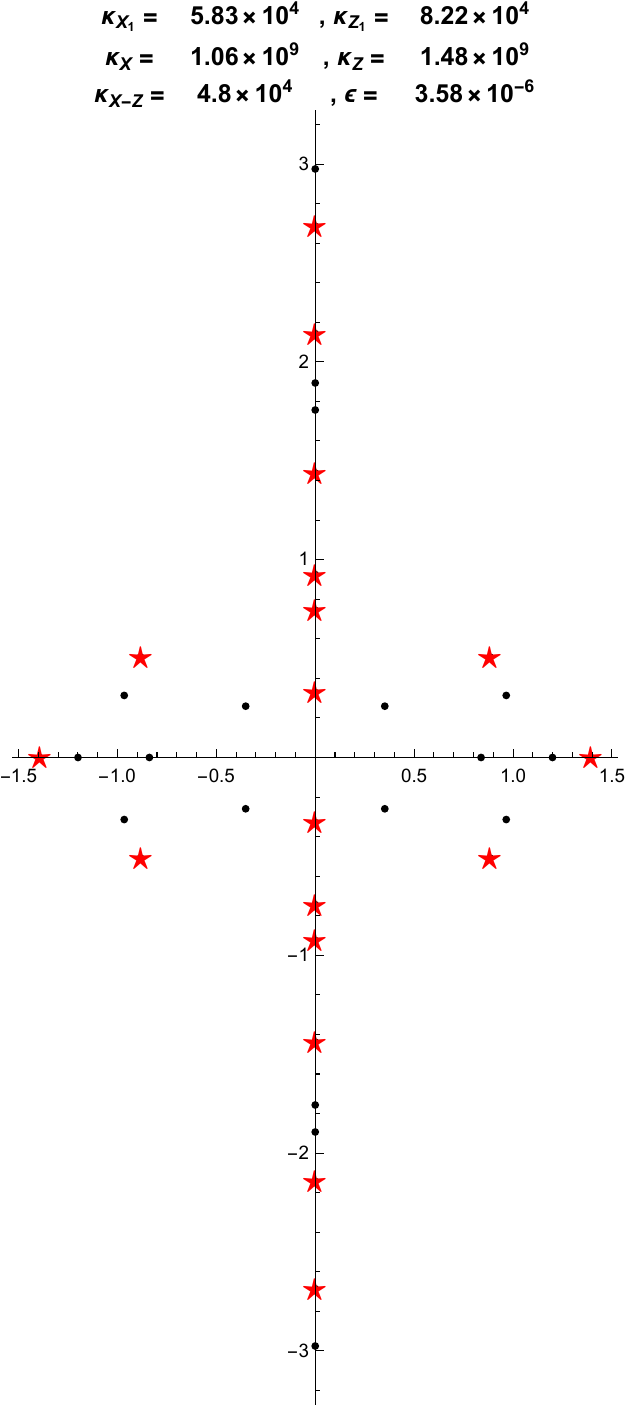}
 \caption{The best (left) and the worst (right) splittings of the spectrum of $\mathcal C_1$ from Example~\ref{ex:3}(a). The points of the spectrum of $\mathcal C_1$ related to $X$ are depicted by points, and related to $Z$ are depicted by stars} \label{f:gyro1-1}
\end{center}
\end{figure}
\begin{figure}[htb]
\begin{center}
\includegraphics[width=0.45\textwidth]{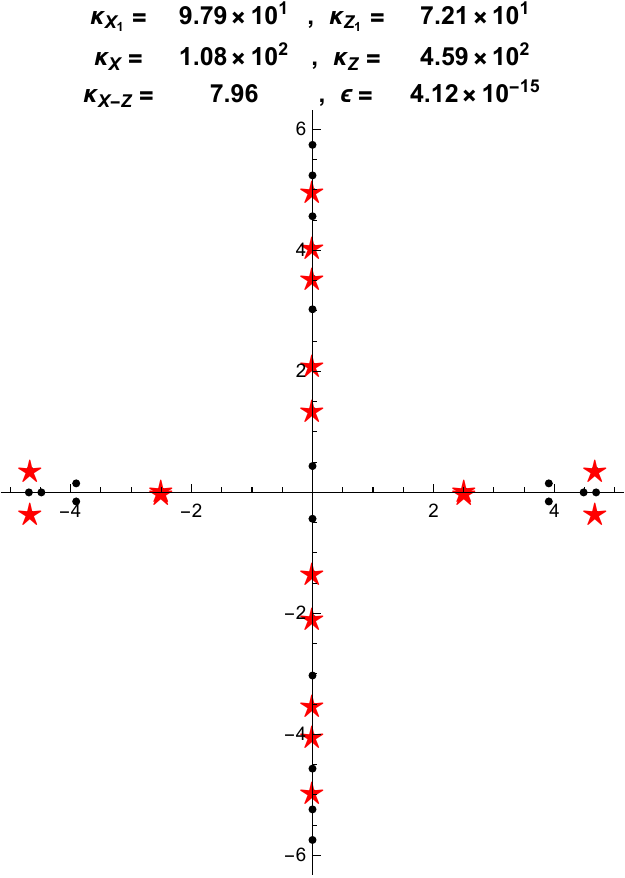}\hfill
\includegraphics[width=0.45\textwidth]{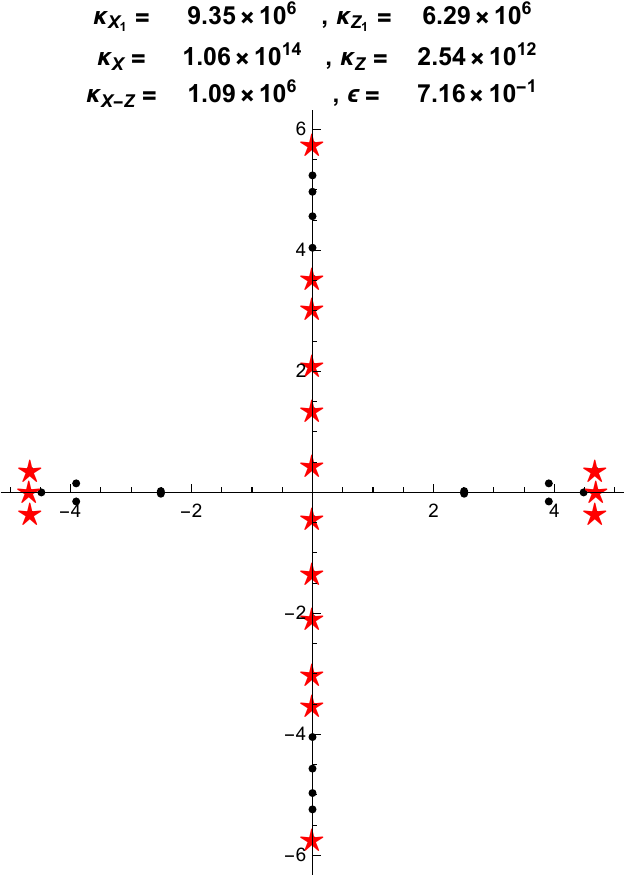}
 \caption{The best (left) and the worst (right) splittings of the spectrum of $\mathcal C_1$ from Example~\ref{ex:3}(b). The points of the spectrum of $\mathcal C_1$ related to $X$ are depicted by points, and related to $Z$ are depicted by stars} \label{f:gyro1-10}
\end{center}
\end{figure}

\begin{example}\label{ex:3}
We consider a pencil corresponding to canonical differential equation~\cite{Duffin60,Guo04,Lancaster66,
Lancaster99,Mehrmann-Watkin00,Tisseur-Meerbergen01},~\cite[ch.~II, \S~3.2]{Yakubovich-Starzhinskii75:eng}. Such equations describe gyroscopic systems. Namely, we assume that $B$ and $C$ are real, $C$ is Hermitian and $B$ is skew-Hermitian ($B^H=-B$). In this case, the spectrum of $\mathcal C_1$ is symmetric with respect to both the real and imaginary axes. So, it is natural to put all symmetric pairs of eigenvalues into the same parts.

(a) We set $n=18$ and create a skew-Hermitian matrix $B$ and a Hermitian matrix $C$ (i.~e., $B^H=-B$ and $C^H=C$) consisting of random real numbers uniformly distributed in $[-1,1]$. We form the companion matrix $\mathcal C_1$ and find its eigenvalues and eigenvectors. All eigenvalues are distinct and we obtain $2n=36$ different eigenvalues. We construct all splittings of the set $V$ of all normalized eigenvectors into two parts $V_X$ and $V_Z$, consisting of $n$ vectors, such that if an imaginary eigenvalue $\lambda$ is related to $V_X$, then the conjugate eigenvalue $\bar\lambda$ is also related to $V_X$, and if a real eigenvalue $\lambda$ is related to $V_X$, then the opposite eigenvalue $-\lambda$ is also related to $V_X$, and if a proper (i.~e., with both $\Real\lambda\neq0$ and $\Imaginary\lambda\neq0$) complex eigenvalue $\lambda$ is related to $V_X$, then the other three eigenvalues $-\lambda$, $\bar\lambda$, and $-\bar\lambda$ are also related to $V_X$.
We repeat the calculations from Example~\ref{ex:1}(a). The result is presented in Figure~\ref{f:gyro1-1}.

Then we perform a modification of this experiment:
(b) with a matrix $B$ consisting of random real numbers uniformly distributed in $[-1,1]$ and a matrix $C$ consisting of random real numbers uniformly distributed in $[-10,10]$, see Figure~\ref{f:gyro1-10}.
\end{example}

Numerical experiments carried out show that the relative error $\varepsilon_{\text{worst}}$ in $U(1)$ for the worst complete pair can be noticeably greater than the relative error $\varepsilon_{\text{best}}$ in $U(1)$ for best complete pair (especially, see Figures~\ref{f:real01-1} and~\ref{f:gyro1-10}). It is also worth noticing that the large relative error $\varepsilon$ is more likely associated with large $\varkappa(X)$ or $\varkappa(Z)$ than large $\varkappa(X-Z)$.
Some authors restrict themselves to searching for a dominant solvent, i.e., a solvent that corresponds to the largest eigenvalues in absolute value.
Figures~\ref{f:comp1-10},~\ref{f:comp10-10}, and~\ref{f:gyro1-1} confirm that
such a kind of solvents may lead to a good complete pair. Nevertheless, the remaining examples show that in general there is no simple connection between a good complete pair and a special type of splitting of the spectrum of $\mathcal C_1$ into two parts.


\begin{thebibliography}{10}

\bibitem{Davis81}
G.~J. Davis.
\newblock Numerical solution of a quadratic matrix equation.
\newblock {\em SIAM J. Sci. Statist. Comput.}, 2(2):164--175, 1981.

\bibitem{Duffin60}
R.~J. Duffin.
\newblock The {R}ayleigh-{R}itz method for dissipative or gyroscopic systems.
\newblock {\em Quart. Appl. Math.}, 18:215--221, 1960/61.

\bibitem{Gantmakher59:eng}
F.~R. Gantmacher.
\newblock {\em The theory of matrices}, volume~1.
\newblock AMS Chelsea Publishing, Providence, RI, 1959.

\bibitem{Gohberg-Lancaster-Rodman:Matrix_polynomials}
I.~Gohberg, P.~Lancaster, and L.~Rodman.
\newblock {\em Matrix polynomials}.
\newblock Academic Press, Inc. [Harcourt Brace Jovanovich, Publishers], New
  York-London, 1982.
\newblock Computer Science and Applied Mathematics.

\bibitem{Golub-Van_Loan96:eng}
G.~H. Golub and Ch.~F. Van~Loan.
\newblock {\em Matrix computations}.
\newblock Johns Hopkins Studies in the Mathematical Sciences. Johns Hopkins
  University Press, Baltimore, MD, third edition, 1996.

\bibitem{Guo04}
C.-H. Guo.
\newblock Numerical solution of a quadratic eigenvalue problem.
\newblock {\em Linear Algebra Appl.}, 385:391--406, 2004.

\bibitem{Guo-Higham-Tisseur08}
C.-H. Guo, N.~J. Higham, and F.~Tisseur.
\newblock Detecting and solving hyperbolic quadratic eigenvalue problems.
\newblock {\em SIAM J. Matrix Anal. Appl.}, 30(4):1593--1613, 2008/09.

\bibitem{Higham97}
N.~J. Higham.
\newblock Stable iterations for the matrix square root.
\newblock {\em Numer. Algorithms}, 15(2):227--242, 1997.

\bibitem{Higham08}
N.~J. Higham.
\newblock {\em Functions of matrices{\rm:} theory and computation}.
\newblock Society for Industrial and Applied Mathematics (SIAM), Philadelphia,
  PA, 2008.

\bibitem{Higham-Kim00}
N.~J. Higham and H.-M. Kim.
\newblock Numerical analysis of a quadratic matrix equation.
\newblock {\em IMA J. Numer. Anal.}, 20(4):499--519, 2000.

\bibitem{Higham-Kim01}
N.~J. Higham and H.-M. Kim.
\newblock Solving a quadratric matrix equation by {N}ewton's method with exact
  line searches.
\newblock {\em SIAM J. Matrix Anal. Appl.}, 23(2):303--316 (electronic), 2001.

\bibitem{Keldysh51:eng}
M.~V. Keldysh.
\newblock On the characteristic values and characteristic functions of certain
  classes of non-self-adjoint equations.
\newblock {\em Doklady Akad. Nauk SSSR {\rm(}N.~S.{\rm)}}, 77(1):11--14, 1951.
\newblock (in Russian).

\bibitem{Keldysh71:eng}
M.~V. Keldysh.
\newblock The completeness of eigenfunctions of certain classes of
  nonselfadjoint linear operators.
\newblock {\em Uspehi Mat. Nauk}, 26(4(160)):15--41, 1971.
\newblock (in Russian); English translation in \emph{Russian Math. Surveys},
  \textbf{26}(4):15--44 (1971).

\bibitem{Kostrub23:eng-rus}
I.~D. Kostrub.
\newblock {H}amilton--{C}ayley theorem and the representation of the resolvent.
\newblock {\em Funktsional. Anal. i Prilozhen.}, 57(4):130--132, 2023.
\newblock (in Russian).

\bibitem{Krein-LangerI:eng}
M.~G. Kre{\u\i}n and H.~Langer.
\newblock On some mathematical principles in the linear theory of damped
  oscillations of continua. {I}.
\newblock {\em Integral Equations Operator Theory}, 1(3):364--399, 1978.

\bibitem{Krein-LangerII:eng}
M.~G. Kre{\u\i}n and H.~Langer.
\newblock On some mathematical principles in the linear theory of damped
  oscillations of continua. {II}.
\newblock {\em Integral Equations Operator Theory}, 1(4):539--566, 1978.

\bibitem{Kurbatova-POMI:eng}
I.~V. Kurbatova.
\newblock Functional calculus generated by a quadratic pencil.
\newblock {\em Zap. Nauchn. Sem. POMI}, 389:113--130, 2011.
\newblock (in Russian); English translation in \emph{J. Math. Sci.},
  \textbf{182} (2012), no.~5, 646--655.

\bibitem{Lancaster66}
P.~Lancaster.
\newblock {\em Lambda-matrices and vibrating systems}, volume~94 of {\em
  International Series of Monographs in Pure and Applied Mathematics}.
\newblock Pergamon Press, Oxford-New York-Paris, 1966.

\bibitem{Lancaster99}
P.~Lancaster.
\newblock Strongly stable gyroscopic systems.
\newblock {\em Electron. J. Linear Algebra}, 5:53--66, 1999.

\bibitem{Langer67}
H.~Langer.
\newblock \"{U}ber stark ged\"{a}mpfte {S}charen im {H}ilbertraum.
\newblock {\em J. Math. Mech.}, 17:685--705, 1967/68.

\bibitem{Langer76}
H.~Langer.
\newblock Factorization of operator pencils.
\newblock {\em Acta Sci. Math. (Szeged)}, 38(1-2):83--96, 1976.

\bibitem{Marcus88:eng}
A.~S. Markus.
\newblock {\em Introduction to the spectral theory of polynomial operator
  pencils}, volume~71 of {\em Translations of Mathematical Monographs}.
\newblock American Mathematical Society, Providence, RI, 1988.

\bibitem{Mehrmann-Watkin00}
V.~Mehrmann and D.~Watkins.
\newblock Structure-preserving methods for computing eigenpairs of large sparse
  skew-{H}amiltonian/{H}amiltonian pencils.
\newblock {\em SIAM J. Sci. Comput.}, 22(6):1905--1925, 2000.

\bibitem{Michel-Herget93}
A.~N. Michel and Ch.~J. Herget.
\newblock {\em Applied algebra and functional analysis}.
\newblock Dover Publications, Inc., New York, 1993.

\bibitem{Naimark72:eng}
M.~A. Na{\u\i}mark.
\newblock {\em Normed algebras}.
\newblock Wolters-Noordhoff Series of Monographs and Textbooks on Pure and
  Applied Mathematics. Wolters-Noordhoff Publishing, Groningen, third edition,
  1972.

\bibitem{Perov-Kostrub20:eng}
A.~I. Perov and I.~D. Kostrub.
\newblock Differentail equations in {B}anach algebras.
\newblock {\em Dokl. Akad. Nauk}, 491(1):73--77, 2020.

\bibitem{Perov-Kostrub20a:eng}
A.~I. Perov and I.~D. Kostrub.
\newblock {\em On differential equations in {B}anach algebras}.
\newblock VGU Publishing House, Voronezh, 2020.
\newblock (in Russian).

\bibitem{Rudin-FA73:eng}
W.~Rudin.
\newblock {\em Functional analysis}.
\newblock McGraw-Hill Series in Higher Mathematics. McGraw-Hill Book Co., New
  York--D\"usseldorf--Johannesburg, first edition, 1973.

\bibitem{Shilov-LA77:eng}
G.~E. Shilov.
\newblock {\em Linear algebra}.
\newblock Dover Publications, Inc., New York, {E}nglish edition, 1977.

\bibitem{Shkalikov-MS88:eng}
A.~A. Shkalikov.
\newblock Strongly damped operator pencils and the solvability of the
  corresponding operator-differential equations.
\newblock {\em Mat. Sb. (N.S.)}, 135(177)(1):96--118, 1988.
\newblock (in Russian); English translation in \emph{Math. USSR-Sb.},
  \textbf{63}(1):97--119, 1992.

\bibitem{Shkalikov96:eng}
A.~A. Shkalikov.
\newblock Operator pencils arising in elasticity and hydrodynamics: The
  instability index formula.
\newblock In I.~Gohberg, P.~Lancaster, and P.~N. Shivakumar, editors, {\em
  Recent Developments in Operator Theory and Its Applications}, pages 358--385,
  Basel, 1996. Birkh{\"a}user Basel.

\bibitem{Shkalikov98}
A.~A. Shkalikov.
\newblock Factorization of elliptic pencils and the {M}andelstam hypothesis.
\newblock In {\em Contributions to operator theory in spaces with an indefinite
  metric ({V}ienna, 1995)}, volume 106 of {\em Oper. Theory Adv. Appl.}, pages
  355--387. Birkh\"{a}user, Basel, 1998.

\bibitem{Shkalikov-Pliev89:eng}
A.~A. Shkalikov and V.~T. Pliev.
\newblock Compact perturbations of strongly damped operator pencils.
\newblock {\em Math. Notes}, 45(2):167--174, 1989.

\bibitem{Tisseur-Meerbergen01}
F.~Tisseur and K.~Meerbergen.
\newblock The quadratic eigenvalue problem.
\newblock {\em SIAM Rev.}, 43(2):235--286, 2001.

\bibitem{Tsai-Chen-Shieh92}
J.~S.~H. Tsai, C.~M. Chen, and L.~S. Shieh.
\newblock A computer-aided method for solvents and spectral factors of matrix
  polynomials.
\newblock {\em Applied mathematics and computation}, 47(2-3):211--235, 1992.

\bibitem{Van_Loan77}
Ch.~F. Van~Loan.
\newblock The sensitivity of the matrix exponential.
\newblock {\em SIAM J. Numer. Anal.}, 14(6):971--981, 1977.

\bibitem{Wolfram}
S.~Wolfram.
\newblock {\em The {M}athematica book}.
\newblock Wolfram Media, New York, fifth edition, 2003.

\bibitem{Yakubovich-Starzhinskii75:eng}
V.~A. Yakubovich and V.~M. Starzhinskii.
\newblock {\em Linear differential equations with periodic coefficients. 1, 2}.
\newblock Halsted Press [John Wiley \& Sons], New York--Toronto, 1975.

\end{thebibliography}
\end{document}